\documentclass[11pt, reqno]{amsart}
\usepackage{amsmath, amssymb, graphicx, enumerate, amsthm, amscd, xspace}
\usepackage[pdfencoding=auto, psdextra]{hyperref}
\usepackage{paralist}

\usepackage{xcolor}
\hypersetup{
    colorlinks,
    linkcolor={red!50!black},
    citecolor={blue!50!black},
    urlcolor={blue!80!black}
}

\graphicspath{{./}{figures/}{../figures}}

\addtolength{\textwidth}{2 cm}
\addtolength{\hoffset}{-1 cm}
\addtolength{\textheight}{3 cm}
\addtolength{\voffset}{-1.5 cm}

\openup 1.5\jot

\newtheorem{thm}{Theorem}[section]
\newtheorem{lem}[thm]{Lemma}

\theoremstyle{definition}
\newtheorem{defn}[thm]{Definition}
\newtheorem{defns}[thm]{Definitions}
\newtheorem{notn}[thm]{Notation}

\theoremstyle{remark}
\newtheorem{rem}[thm]{Remark}


\newcommand{\cT}{{{\mathcal{T}}}}

\newcommand{\fs}{{{\mathfrak{s}}}}
\newcommand{\fu}{{{\mathfrak{u}}}}


\newcommand{\R}{{{\mathbb{R}}}}




\newcommand{\hx}{{{\hat{x}}}}




\DeclareMathOperator{\den}{den}
\DeclareMathOperator{\trim}{trim}
\DeclareMathOperator{\ind}{index}




\newcommand{\nup}{\nu_{\fs \times \fu}}

\newcommand{\pA}{pseudo-Anosov\xspace}
\newcommand{\mpA}{measurable pseudo-Anosov\xspace}

\title{The dynamics of measurable pseudo-Anosov maps}
\date{July 2024}
\author{Philip Boyland}
\address{Department of Mathematics\\University of Florida\\372 Little
    Hall\\Gainesville\\ FL 32611-8105, USA}
\email{boyland@ufl.edu}
\author{Andr\'e de Carvalho}
\address{Departamento de Matem\'atica Aplicada\\ IME-USP\\ Rua Do Mat\~ao
    1010\\ Cidade Universit\'aria\\ 05508-090 S\~ao Paulo SP\\ Brazil}
\email{andre@ime.usp.br}
\author{Toby Hall}
\address{Department of Mathematical Sciences\\ University of Liverpool\\
    Liverpool L69 7ZL, UK}
\email{tobyhall@liverpool.ac.uk}

\thanks{The authors acknowledge the support of S\~ao Paulo Research Foundation
(FAPESP) grant number 2016/25053-8. This work is also partially supported by the
Simons Foundation Award No.\ 663281 granted to the Institute of Mathematics of the
Polish Academy of Sciences for the years 2021\,--\,2023.}

\begin{document}
\begin{abstract}
    We study the dynamics of  measurable pseudo-Anosov homeomorphisms of surfaces,
    a generalization of Thurston's pseudo-Anosov homeomorphisms.  A measurable
    pseudo-Anosov map has a transverse pair of full measure turbulations consisting of
    streamlines which are dense immersed lines: these turbulations are equipped
    with measures which are expanded and contracted uniformly by the homeomorphism.
    The turbulations need not have a good product structure anywhere, but have some
    local structure imposed by the existence of tartans: bundles of unstable and
    stable streamline segments which intersect regularly, and on whose
    intersections the product of the measures on the turbulations agrees with the
    ambient measure.

    We prove that measurable pseudo-Anosov maps are transitive, have dense periodic
    points, sensitive dependence on initial conditions, and are ergodic with respect to
    the ambient measure.

    Measurable pseudo-Anosovs maps were introduced in~\cite{ummpA}, where we
    constructed a parameterized family of non-conjugate examples on the sphere.
\end{abstract}

\maketitle

\section{Introduction}

One of the groundbreaking results proved by William Thurston in the late 1970s and
early 1980s was his Classification Theorem for isotopy classes of surface
homeomorphisms~\cite{Th, FLP}. Such a class contains a map of finite order, or it
contains a \pA map, or the surface can be cut into pieces which carry maps of these
two types. Pseudo-Anosov maps are defined as having a transverse pair of invariant
foliations, with finitely many pronged singularities, which carry a pair of holonomy
invariant measures, one expanding and the other contracting under the map.  In a
precise sense, pseudo-Anosov maps are simplest in their isotopy class, which implies,
among other things, that all maps in the class have infinitely many periodic orbits,
of infinitely many periods, and have positive topological entropy.

The Classification Theorem applies to surfaces of finite topological
type, that is, to compact surfaces from which finitely many points
have been removed.  Dynamicists have often put this theorem to use in
studying surface dynamics by finding periodic orbits, removing them
from the surface, and applying Thurston's theorem to the isotopy class
on the punctured surface thus obtained (\cite{bowen,Bdamster}).  It
then follows that the original map has at least as much dynamics as the
``hidden \pA map''.

Powerful though it is, there is a fundamental shortcoming to this approach when
studying the dynamics of parameterized families of maps: there are only countably
many isotopy classes of homeomorphisms on a surface of finite topological type, even
if we allow an arbitrary finite number of punctures. This means that it isn't
possible to capture all of the dynamical complexity of a whole parameterized family
using only canonical representatives from the isotopy classes of maps in the family
acting on the surface punctured at finite invariant sets. Thus we are faced with the
task of finding a new class of maps that retains many of the properties of \pA maps
but is rich enough to occur in every member of a parameterized family.

Our approach to this task was inspired by one of the standard approaches to
constructing \pA maps.  Beginning with a \pA isotopy class, a train track algorithm
such as that of Bestvina-Handel~\cite{BH,FM,Los} produces an expanding map on a
one-dimensional graph, its \emph{invariant train track}. Embedding the inverse
limit of the induced map acting on the track into the surface and then collapsing
down appropriate complementary regions yields the associated \pA map.  The holonomy
invariant measures on the invariant foliations are derived from a pair of measures
associated with the expanding map.

To work with a parameterized family, we can instead start with a family of piecewise
expanding maps on a \textit{fixed} graph and turn the same handle: take inverse
limits, embed into a surface and collapse complementary regions. We studied the
simplest such example, in which the graph is the interval and the family of maps is
the tent family. This led to the appropriate generalization of \pA maps considered
here, which we called \emph{measurable \pA maps} (see~\cite{prime, ummpA}).

\smallskip
Inverse limits of tent maps on the interval have been much studied by topologists
since the early 20th century and yield very complicated continua. For example, when
the parameter $s$ is such that critical orbit of the tent map $f_s$ of slope $s$ is
dense (a full measure, dense $G_\delta$ set of parameters), theorems of Bruin and of
Raines \cite{bruin, raines} imply that the inverse limit is nowhere locally the
product of a Cantor set and an interval. In addition, the collection of globally
regular (bi-dense, continuous injective images of $\R$) path components is meagre. In
contrast, and of critical importance here, with respect to the natural invariant
measure, the collection of globally regular path components has full measure.
Moreover, there is a pair of measures, one expanding by factor $s$ on path components
of the inverse limit, and the other contracting in the ``fiber'' direction by $1/s$;
these are built from Lebesgue measure on the interval and the unique absolutely
continuous invariant measure of the tent map respectively. Further, there are
positive measure sets, called \emph{boxes}, with a Fubini-like splitting connecting
these two measures to the natural, ambient, invariant measure \cite{typical}.

Embedding the inverse limit as an attractor in the disk using the Brown-Barge-Martin
procedure~\cite{BBM, usbbm} and then collapsing down complementary regions produces a
homeomorphism, $F_s$, of the sphere~\cite{prime}. The path components of the inverse
limit become unstable sets while the fiber direction collapses down to stable sets.
The ambient invariant measure on the inverse limit becomes an invariant Oxtoby-Ulam
measure on the sphere. The pair of measures on the inverse limit become unstable and
stable measures and the boxes become positive measure sets with a nice Fubini-like
structure called \emph{tartans}.  It is shown in~\cite{ummpA} that the resulting
\emph{unimodal measurable pseudo-Anosov} family~$\{F_s\}_{s\in(\sqrt 2,2]}$ has the
following properties:
\begin{itemize}
    \item For each $s\in(\sqrt 2,2]$, the map $F_s$ has topological
          entropy $\ln s$; in particular, no two members of the family are
          conjugate.
    \item Every member of the family is a \mpA map as defined here.
    \item For an uncountable dense set of values of~$s$, $F_s$ is a
          \emph{generalized pseudo-Anosov map} as defined in~\cite{eqgpa,gpA}.
    \item For a countable discrete set of parameter values, $F_s$ is a ``classical''
          pseudo-Anosov map with finitely many one-pronged singularities.
    \item However, for a full Lebesgue measure subset of parameters $s\in(\sqrt 2,2]$,
          $F_s$ is a ``genuine'' \mpA map in the sense that it is neither a classical \pA
          nor a generalized \pA map.
\end{itemize}

\smallskip
The definition of \mpA maps is general enough to include a large class of maps,
including, of course, the classical \pA\ maps introduced by Thurston, the generalized
\pA maps from~\cite{eqgpa,gpA}, and the family just constructed.  It has two
fundamental components. The first is that the transverse pair of invariant foliations
of a \pA map is generalized to a transverse pair of invariant \emph{turbulations} ---
decompositions of almost all of the surface into continuous injective images of $\R$,
called \emph{streamlines}, themselves equipped with 1-dimensional measures --- one of
which is uniformly expanded, and the other uniformly contracted, under the dynamics.
The second component is the ambient (2-dimensional) Oxtoby-Ulam invariant measure and
the \emph{tartans} which play the role of local charts. Each tartan has a product
structure using arcs from the stable and unstable streamlines, and the measures on
the streamlines are connected to the ambient invariant measure via a Fubini-like
hypothesis. Tartans form a family of positive measure sets whose union has full
measure.

\smallskip
This purpose of this article is to prove that this definition carries
dynamical content by showing that \mpA\ maps have the following properties:
\begin{itemize}
    \item They are transitive.
    \item The collection of their periodic orbits is dense.
    \item With a mild additional hypothesis, they are ergodic.
\end{itemize}

It is shown in~\cite{FLP} that Thurston's \pA maps have these properties and here we
extend them to the much more general class of \mpA maps.

\smallskip
As we have explained, the definition of measurable pseudo-Anosov maps was
predominantly guided by Thurston's definition and the examples derived from unimodal
family, but we were also influenced by the higher-dimensional holomorphic analogs
in~\cite{BLS,LM,Su}. In particular, the term `turbulation' is borrowed from comments
in~\cite{LM}.

\section{Turbulations, tartans, and \mpA maps}
\label{sec:gmpa-intro}

In this section we define measurable pseudo-Anosov maps, together with their
invariant measured turbulations and the tartans which give these turbulations local
structure. Throughout the paper, $\Sigma$ will denote a compact surface, and $\mu$
will denote an \emph{Oxtoby-Ulam} (or \emph{OU}) measure on~$\Sigma$: one which is
Borel, non-atomic, positive on open sets, and satisfies $\mu(\partial\Sigma)=0$.

The definitions are lengthy, and we start by motivating and summarising them. In
place of the measured foliations which are left invariant by a pseudo-Anosov map, the
invariant stable and unstable objects of a measurable pseudo-Anosov map are
\emph{measured turbulations}. Each turbulation is a union of disjoint immersed lines
(i.e.\ continuous injective images of~$\R$) called \emph{streamlines}, which take the
place of the leaves of pseudo-Anosov foliations. Each streamline is equipped with its
own OU \emph{stream measure}, which takes the place of the transverse measures on
pseudo-Anosov foliations. The measurable pseudo-Anosov map will send streamlines to
streamlines in each turbulation, expanding stream measures by a constant
factor~$\lambda$ in the \emph{unstable turbulation}, and contracting by a constant
factor~$1/\lambda$ in the \emph{stable turbulation}.

Figure~\ref{fig:turbulations} is a schematic depiction of some streamlines of a
typical transverse pair of turbulations in a subset of~$\Sigma$. The reader should
bear in mind that there may be backtracking (or `kinks') at every scale and in every
open subset of~$\Sigma$, so that the figure only shows the first level of complexity
of the turbulations. Note that, unlike pseudo-Anosov foliations which necessarily
have leaves which end at pronged singularities, every streamline of a turbulation is
a full immersed line: however the streamlines are not required to cover the whole
of~$\Sigma$, but only a full $\mu$-measure subset of it (so, for example, the
invariant foliations of a pseudo-Anosov map become invariant turbulations when we
throw away the singular leaves).

\begin{figure}[htbp]
    \begin{center}
        \includegraphics[width=0.8\textwidth]{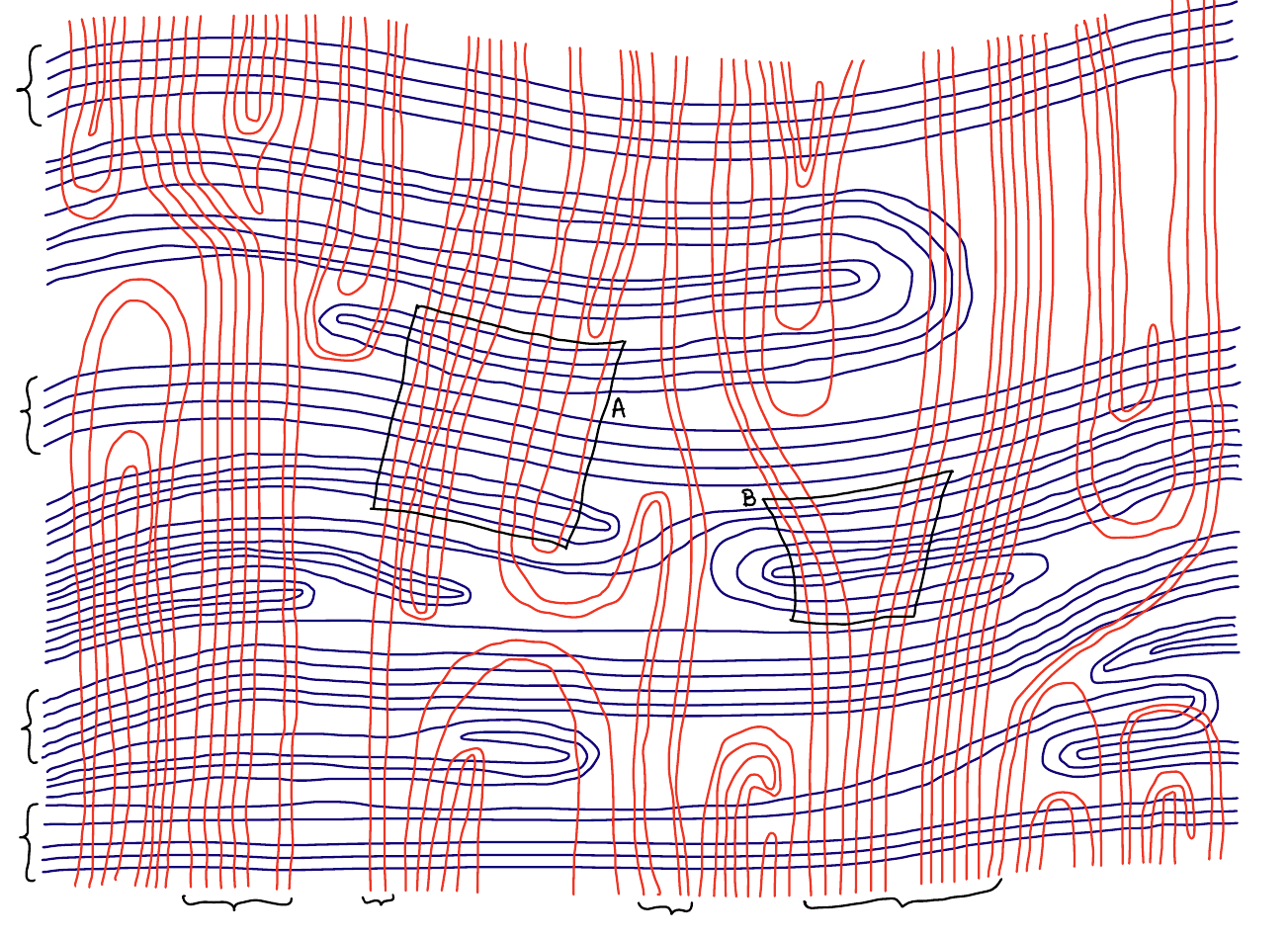}
    \end{center}
    \caption{Invariant turbulations of a Measurable pseudo-Anosov map (schematic)}
    \label{fig:turbulations}
\end{figure}

The turbulations may not define a product structure in any open subset. Nevertheless,
suitable collections of \emph{stream arcs} of the two turbulations do define a
product structure on the set of their intersections. On the largest scale in
Figure~\ref{fig:turbulations}, the stream arcs indicated by braces on the left and
bottom of the figure intersect regularly: each stream arc of one turbulation
intersects each stream arc of the other turbulation exactly once. We call such a
collection of stream arcs a \emph{tartan}, and the stream arcs which constitute it
\emph{fibers}: the tartan~$R$ endows the set~$R^\pitchfork$ of its intersection
points with a product structure, and we require the measure on~$R^\pitchfork$ induced
by the product of the stream measures on the fibers to agree with the ambient
measure~$\mu$: this is called \emph{compatibility} of~$R$. Provided that
$\mu(R^\pitchfork)>0$, the tartan has given structure to a significant subset
of~$\Sigma$. Tartans play the same structural r\^ole as charts and changes of charts
in the theory of foliations, and are reminiscent of product structures in the
differentiable dynamics of Axiom~A maps.

There are many other tartans in Figure~\ref{fig:turbulations}: for example, the
stream arcs in the regions labeled $A$ and $B$ --- or at least those stream arcs
which have been drawn --- are the fibers of tartans (some of whose intersections are
also intersections of the larger tartan). We say that the pair of turbulations is
\emph{full} if there is a countable collection of tartans which gives structure to a
full $\mu$-measure subset of~$\Sigma$ (the union of their intersection sets has full
measure), and in addition every non-empty open subset of~$\Sigma$ contains a
tartan~$R$ with $\mu(R^\pitchfork)>0$. The latter condition need not follow from the
former: additional regularity conditions which mean that it does are discussed in
Section~\ref{sec:pos-meas-tart}.

These are the essential ingredients for the definition of a measurable pseudo-Anosov
map: it should preserve a transverse full pair of measured turbulations, expanding
the streamlines of one and contracting the streamlines of the other by the same
uniform factor. There are some additional technical conditions, the most important of
which are:
\begin{itemize}
    \item by analogy with pseudo-Anosov foliations, every streamline of the invariant
          turbulations is dense in~$\Sigma$; and
    \item stream arcs of small measure have small diameter (with respect to some
          fixed metric on~$\Sigma$) --- this condition is called \emph{tameness}.
\end{itemize}

It turns out that holonomy invariance of the measures within tartans is a consequence
of compatibility (Lemma~\ref{lem:hol_inv}): if~$R$ is a compatible tartan and~$B$ is
a subset of $R^\pitchfork$ contained in a fiber of one of the turbulations, then the
stream measure of~$B$ is preserved as it is pushed into other fibers along fibers of
the second turbulation.

We now proceed with the formal definitions.

\begin{defns}[Measured turbulations, streamlines, dense streamlines, stream measure, transversality, stream arcs]
    \label{defn:turbulation}
    A \emph{measured turbulation $(\cT, \nu)$} on~$\Sigma$ is a partition of a full
    $\mu$-measure Borel subset of~$\Sigma$ into immersed lines called
    \emph{streamlines}, together with an OU measure $\nu_\ell$ on each
    streamline~$\ell$, which assigns finite measure to closed arcs in~$\ell$.
    \begin{itemize}
        \item We refer to the measures $\nu_\ell$ as \emph{stream measures} to
              distinguish them from the ambient measure~$\mu$ on~$\Sigma$.
        \item We say that the measured turbulation \emph{has dense streamlines} if
              every streamline is dense in~$\Sigma$.
        \item Two measured turbulations are \emph{transverse} if they are
              topologically transverse on a full $\mu$-measure subset of~$\Sigma$ (to be
              precise, there is a full measure subset of~$\Sigma$, every point~$x$ of which
              is contained in streamlines~$\ell$ and~$\ell'$ of the two turbulations, which
              intersect transversely at~$x$).
        \item Given distinct points $x$ and $y$ of a streamline~$\ell$ of~$\cT$, we
              write $[x,y]_\ell$, or just $[x,y]$ if the streamline is irrelevant or clear
              from the context, for the (unoriented) closed arc in~$\ell$ with
              endpoints~$x$ and~$y$; and $[x,y)_\ell$ and $(x,y)_\ell$ for the arcs
              obtained by omitting one or both endpoints of $[x,y]_\ell$. We refer to these
              as \emph{stream arcs} of the turbulation.
        \item The \emph{measure} of a stream arc is its stream measure.
    \end{itemize}
\end{defns}

\medskip

We impose a regularity condition on turbulations which requires that stream
arcs of small measure are small. Note that the following definition is independent
of the choice of metric on~$\Sigma$: since $\Sigma$ is compact, it is equivalent to
the topological condition that for every neighborhood~$N$ of the diagonal~$\{(x,
    x)\,:\,x\in\Sigma\}$ in~$\Sigma\times\Sigma$, there is some $\delta>0$ such that if
$[x, y]$ is a stream arc with $\nu([x, y])<\delta$, then $(x,y)\in N$. However the
metric formulation is usually easier to apply directly.

\begin{defn}[Tame turbulation] \label{defn:tame}
    Let~$d$ be any metric on~$\Sigma$ compatible with its topology. A measured turbulation $(\cT, \nu)$ on~$\Sigma$ is \emph{tame} if for
    every~$\epsilon>0$ there is some $\delta>0$ such that if $[x, y]$ is a stream
    arc with $\nu([x,y])<\delta$, then $d(x,y)<\epsilon$.
\end{defn}

\medskip

Let $(\cT^s, \nu^s)$ and $(\cT^u, \nu^u)$ be a transverse pair of
measured turbulations on~$\Sigma$. With a view to what is to come, we refer to
them as \emph{stable} and \emph{unstable} turbulations, and similarly apply the
adjectives stable and unstable to their streamlines, stream measures, stream
arcs, etc., although dynamics will not enter the picture until the very end of
this section. If~$x$ and $y$ lie in the same streamline of one or both
turbulations, we write, for example, $[x,y]_s$ and $[x,y]_u$ to denote the
stream arcs of the stable or unstable turbulation with $x$ and $y$ as
endpoints.


\begin{defns}[Tartan $R$, fibers, $R^\pitchfork$, positive measure tartan, oriented
        tartan]
    \label{defn:tartan} A \emph{tartan} $R=(R^s, R^u)$ consists of Borel
    subsets $R^s$ and $R^u$ of~$\Sigma$, which are disjoint unions of stable
    and unstable stream arcs respectively, having the following properties:
    \begin{enumerate}[(a)]
        \item Every arc of $R^s$ intersects every arc of $R^u$ exactly once,
              either transversely or at an endpoint of one or both arcs.

        \item There is a consistent orientation of the arcs of $R^s$ and $R^u$:
              that is, the arcs can be oriented so that every arc of $R^s$
              (respectively~$R^u$) crosses the arcs of $R^u$ (respectively~$R^s$) in the
              same order.

        \item The measures of the arcs of $R^s$ and $R^u$ are bounded above.

        \item There is an open topological disk $U\subseteq \Sigma$ which
              contains $R^s\cup R^u$.

    \end{enumerate}

    \medskip

    We refer to the arcs of $R^s$ and $R^u$ as the stable and unstable
    \emph{fibers} of~$R$, to distinguish them from other stream arcs. We write
    $R^\pitchfork$ for the set of \emph{tartan intersection points}: $R^\pitchfork
        =  R^s \cap R^u$. We say that~$R$ is a \emph{positive measure tartan} if
    $\mu(R^\pitchfork)>0$. An \emph{oriented tartan} is a tartan together with a
    consistent orientation of its stable and unstable fibers.
\end{defns}

Note that $R^s$ and $R^u$ are subsets of~$\Sigma$, but they decompose uniquely
as disjoint unions of stable/unstable stream arcs, so where necessary or
helpful can be viewed as the collection of their fibers.

\begin{notn}[$\fs(x)$, $\fu(x)$, $\fs\pitchfork\fu$, $E^\fs$, $E^\fu$, $\psi^{\fs,
                \fu}$, $\nu_\fs$ and $\nu_\fu$, product measure~$\nup$, holonomy maps
        $h_{\fs, \fs'}$, $h_{\fu, \fu'}$] \label{notn:tartan}
    \mbox{}

    Let~$R$ be a tartan; $x,y\in R^\pitchfork$; $\fs$ and $\fs'$ be stable
    fibers of~$R$; and $\fu$ and $\fu'$ be unstable fibers of~$R$. We write (see Figure~\ref{fig:defs}):
    \begin{itemize}
        \item $\nu^s_x$ and $\nu^u_x$ for the stream measures on the stable and
              unstable streamlines through~$x$.

        \item $\fs(x)$ and $\fu(x)$ for the stable and unstable fibers of~$R$
              containing~$x$.

        \item $\fs\pitchfork\fu = \fu\pitchfork\fs \in R^\pitchfork$ for the unique intersection point of $\fs$ and $\fu$.

        \item $E^\fs=\fs\cap R^u = \fs\cap R^\pitchfork$, and $E^\fu=\fu\cap R^s
                  = \fu\cap R^\pitchfork$;

        \item $\psi^{\fs, \fu}\colon E^\fs\times E^\fu \to R^\pitchfork$ for the
              bijection $(x,y)\mapsto \fu(x) \pitchfork \fs(y)$;

        \item $\nu_\fs$ and $\nu_\fu$ for the (restriction of the) stream measures
              on $E^\fs$ and $E^\fu$;

        \item $\nup$ for the product measure $\nu_\fs\times\nu_\fu$ on
              $E^\fs\times E^\fu$;

        \item $h_{\fs, \fs'}\colon E^\fs\to E^{\fs'}$ (respectively $h_{\fu,
                          \fu'}\colon E^\fu\to E^{\fu'}$) for the \emph{holonomy map} given by
              $x\mapsto \fu(x)
                  \pitchfork \fs'$ (respectively $y\mapsto \fs(y) \pitchfork \fu'$).
    \end{itemize}

\end{notn}

\begin{figure}[htbp]
    \begin{center}
        \includegraphics[width=0.55\textwidth]{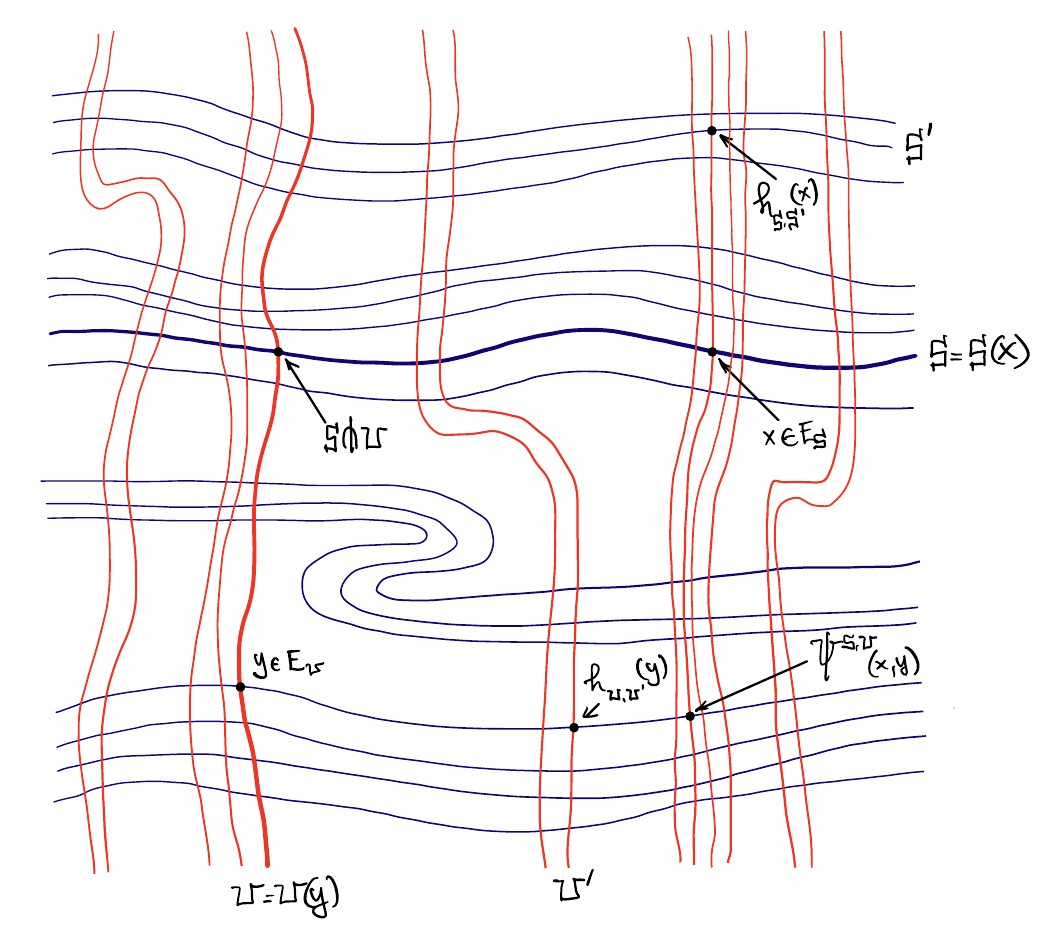}
    \end{center}
    \caption{Illustration of Notation~\ref{notn:tartan}}
    \label{fig:defs}
\end{figure}

Note that $\nu^s_x$ and $\nu^u_y$ are measures on entire
streamlines, whereas $\nu_\fs$ and $\nu_\fu$ are restrictions of such measures
to subsets of $R^\pitchfork$.

\begin{defn}[Compatible tartan]
    We say that the tartan $R=(R^s, R^u)$ is \emph{compatible} (with the
    ambient measure~$\mu$) if, for all stable and unstable fibers~$\fs$
    and~$\fu$, the bijection $\psi^{\fs, \fu}\colon E^\fs\times E^\fu \to
        R^\pitchfork$ is bi-measurable, and $\psi^{\fs, \fu}_* \nup =
        \mu|_{R^\pitchfork}$.
\end{defn}

If~$R$ is compatible and $A\subset R^\pitchfork$ is Borel, then using Fubini's
theorem in the product $E^\fs \times E^\fu$ and pushing forward
to~$R^\pitchfork$ we have, for any stable and unstable fibers~$\fs$ and~$\fu$,
\begin{equation}
    \label{eq:int_formula}
    \mu(A) = \int_{E^\fs} \nu_\fu(A^\fu(x))\,d\nu_\fs(x) = \int_{E^\fu}
    \nu_\fs(A^\fs(y))\,d\nu_\fu(y),
\end{equation}
where $A^\fu(x) = \{y\in E^\fu\,:\, \fu(x)\pitchfork\fs(y) \in A\}$, and
\mbox{$A^\fs(y) = \{x\in E^\fs\,:\, \fu(x)\pitchfork\fs(y) \in A\}$}.

\begin{lem}[Holonomy invariance] \label{lem:hol_inv}
    Let~$R$ be a compatible positive measure tartan, $\fs$ and $\fs'$ be stable
    fibers of~$R$, and $B \subset E^\fs$ be
    $\nu_\fs$-measurable. Then $B' = h_{\fs, \fs'}(B)$ is
    $\nu_{\fs'}$-measurable, and $\nu_{\fs'}(B') =
        \nu_\fs(B)$. The analogous statement holds for holonomies~$h_{\fu,
                \fu'}$.
\end{lem}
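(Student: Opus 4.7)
The plan is to ``fatten'' $B$ into a subset of $R^\pitchfork$ admitting two equivalent descriptions---one via $(\fs, B)$ and one via $(\fs', B')$---and then to apply the compatibility relation to each of them in turn. Since $R$ is a positive measure tartan, $R^\pitchfork$ is nonempty, so I can fix any unstable fiber $\fu$ of $R$. I then set
\[
    A \;=\; \psi^{\fs, \fu}(B \times E^\fu) \;\subset\; R^\pitchfork,
\]
which geometrically is the set of tartan intersection points lying on the unstable fibers $\fu(x)$ with $x \in B$.

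The geometric core of the argument is the identity
\[
    \psi^{\fs, \fu}(B \times E^\fu) \;=\; \psi^{\fs', \fu}(B' \times E^\fu).
\]
To verify this I would use that, for each $x \in B$, the holonomy image $h_{\fs, \fs'}(x) = \fu(x) \pitchfork \fs'$ lies on the very same unstable fiber $\fu(x)$ as~$x$; consequently the collection of unstable fibers of $R$ meeting $B$ coincides with the collection meeting $B'$, so both parametrizations of $A$ enumerate exactly the same points of $R^\pitchfork$.

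Granting this identity, the bi-measurability clause in the definition of compatibility makes $A$ Borel in $R^\pitchfork$; measurability of $(\psi^{\fs', \fu})^{-1}$ then makes $B' \times E^\fu$ Borel, and restricting to a slice $E^{\fs'} \times \{y_0\}$ for any fixed $y_0 \in E^\fu$ exhibits $B'$ as a Borel subset of $E^{\fs'}$. Applying $\psi^{\fs, \fu}_*\nu_{\fs, \fu} = \mu|_{R^\pitchfork}$ to the first description of $A$ and the analogous relation for $(\fs', \fu)$ to the second yields
\[
    \nu_\fs(B) \cdot \nu_\fu(E^\fu) \;=\; \mu(A) \;=\; \nu_{\fs'}(B') \cdot \nu_\fu(E^\fu).
\]
Specializing to $B = E^\fs$ (so $A = R^\pitchfork$) in the same formula, together with the positive measure hypothesis $\mu(R^\pitchfork) > 0$, forces $\nu_\fu(E^\fu) > 0$, so the common factor cancels and $\nu_{\fs'}(B') = \nu_\fs(B)$ as required. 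The statement for unstable holonomies $h_{\fu, \fu'}$ follows by the entirely symmetric argument in which the roles of $\fs$ and $\fu$ are interchanged.

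The main obstacle I anticipate is not any single hard step but the careful bookkeeping needed for the identity $\psi^{\fs, \fu}(B \times E^\fu) = \psi^{\fs', \fu}(B' \times E^\fu)$: one must keep straight that the $\psi$ maps parametrize $R^\pitchfork$ by two directions at once while $h_{\fs, \fs'}$ slides points purely along unstable fibers, and use the uniqueness of the stable and unstable fibers of $R$ through each tartan intersection point. A secondary technical point is the extension from Borel $B$ to a general $\nu_\fs$-measurable set, which I expect to be routine via approximation by Borel envelopes of the same $\nu_\fs$-measure.
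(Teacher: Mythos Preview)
Your proposal is correct and follows essentially the same route as the paper: fix an unstable fiber~$\fu$, identify $\psi^{\fs,\fu}(B\times E^\fu)$ with $\psi^{\fs',\fu}(B'\times E^\fu)$ because holonomy slides along unstable fibers, and then read off both measurability of~$B'$ and the equality of measures from compatibility together with $\nu_\fu(E^\fu)>0$. The paper's proof is slightly terser on the measurability bookkeeping and on the passage from Borel to $\nu_\fs$-measurable sets, but the argument is the same.
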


\begin{proof}
    Let~$\fu$ be any unstable fiber. Then $\nu_\fu(E^\fu)>0$ by~\eqref{eq:int_formula} with $A=R^\pitchfork$.

    Since $x$ and $h_{\fs, \fs'}(x)$ lie on the same unstable fiber~$\fu(x)$ for
    all $x\in B$, we have $\psi^{\fs, \fu}(B\times E^\fu) = \psi^{\fs',
            \fu}(B'\times E^\fu)$. Then $\nu_\fs$-measurability of~$B$ implies
    $\mu$-measurability of~$\psi^{\fs, \fu}(B\times E^\fu) = \psi^{\fs',
            \fu}(B'\times E^\fu)$, and hence $\nu_{\fs'}$-measurability of~$B'$.

    Since $\mu(\psi^{\fs, \fu}(B\times E^\fu)) = \nu_\fs(B)\nu_\fu(E^\fu)$ and
    $\mu(\psi^{\fs', \fu}(B'\times E^\fu)) = \nu_{\fs'}(B')\nu_\fu(E^\fu)$, the
    result follows.
\end{proof}

It is useful to have a version of~\eqref{eq:int_formula} reflecting the fact
that, in compatible tartans, the stream measures are the disintegration of the
ambient measure onto fibers:
\begin{equation}
    \label{eq:int_formula2}
    \mu(A) = \int_{E^\fs} \nu_{\fu(x)}(A\cap \fu(x))\,d\nu_\fs(x) =
    \int_{E^\fu} \nu_{\fs(y)}(A \cap \fs(y))\,d\nu_\fu(y).
\end{equation}
This follows because
\begin{eqnarray*}
    \nu_{\fs(y)}(A \cap \fs(y)) &=& \nu_\fs(h_{\fs(y), \fs}(A\cap \fs(y))) \\
    &=& \nu_\fs(\{\fu(x)\pitchfork \fs\,:\, x\in A \cap \fs(y)\}) \\
    &=& \nu_\fs(\{x'\in E^\fs\,:\, \fu(x')\pitchfork\fs(y)\in A\}) \\
    &=& \nu_\fs(A^\fs(y)),
\end{eqnarray*}
and likewise $\nu_{\fu(x)}(A\cap \fu(x)) = \nu_\fu(A^\fu(x))$ for any
$x\in E^\fs$.

\begin{defns}[Stable and unstable widths $w^s(R)$, $w^u(R)$] Taking
    $B=\fs\cap R^\pitchfork = E^\fs$ in Lemma~\ref{lem:hol_inv}, we see that if~$R$
    is compatible and positive measure then $\nu_\fs(E^\fs)>0$ is independent of
    the stable fiber~$\fs$; and similarly $\nu_\fu(E^\fu)>0$ is independent of the
    unstable fiber~$\fu$. We write $w^s(R)$ and $w^u(R)$ for these quantities (the
    stable and unstable \emph{widths} of~$R$), and note that $\mu(R^\pitchfork) =
        w^s(R)\,w^u(R)$ by~\eqref{eq:int_formula}.
\end{defns}

In order that the ambient measure~$\mu$ is globally compatible with the stream
measures, it is clearly necessary for a full measure subset of~$\Sigma$ to be
covered by intersections of positive measure compatible tartans: this is
condition~(a) in the following definition. However, a corresponding topological
condition, given by part~(b) of the definition, is also useful: it follows
from~(a) if an additional regularity condition is imposed on tartans or on the
turbulations. We return to this point in Section~\ref{sec:pos-meas-tart}.

\begin{defn}[Full turbulations] \label{defn:full-turb}
    We say that the transverse pair $(\cT^s, \nu^s)$ and $(\cT^u, \nu^u)$ is
    \emph{full} if
    \begin{enumerate}[(a)]
        \item There is a countable collection~$R_i$ of (positive measure) compatible tartans with $\mu\left(\bigcup_i R_i^\pitchfork\right)=1$; and

        \item for every non-empty open subset~$U$ of~$\Sigma$, there is a
              positive measure compatible tartan $R=(R^s, R^u)$ with $R^s\cup
                  R^u\subset U$.
    \end{enumerate}
\end{defn}

We are finally in a position to be able to define measurable pseudo-Anosov maps.

\begin{defns}[Image turbulation, measurable pseudo-Anosov turbulations, measurable
        pseudo-Anosov map, dilatation]
    \label{defn:mpa}
    If $F\colon \Sigma\to \Sigma$ is a $\mu$-preserving homeomorphism, we write
    $F(\cT,\nu)$ for the measured turbulation whose streamlines are
    $\{F(\ell)\,:\,\ell\in\cT\}$, with measures $\nu_{F(\ell)} =
        F_*(\nu_\ell)$.

    A pair $(\cT^s, \nu^s), (\cT^u, \nu^u)$ of measured turbulations
    on~$\Sigma$ are \emph{measurable pseudo-Anosov turbulations} if they are
    transverse, tame, full, and have dense streamlines.

    A $\mu$-preserving homeomorphism $F\colon \Sigma\to \Sigma$ is \emph{measurable
        pseudo-Anosov} if there is a pair $(\cT^s, \nu^s), (\cT^u, \nu^u)$ of
    measurable pseudo-Anosov turbulations and a number~$\lambda>1$, called the
    \emph{dilatation} of~$F$, such that $F(\cT^s, \nu^s) = (\cT^s,
        \lambda\nu^s)$ and $F(\cT^u, \nu^u) = (\cT^u, \lambda^{-1}\nu^u)$.
\end{defns}

Note that the positions of $\lambda$ and $\lambda^{-1}$ here are not errors: they
differ from those familiar in the definition of pseudo-Anosov maps since we are
using measures along streamlines rather than transverse to them.

\begin{rem}
    If $F\colon\Sigma\to\Sigma$ is a \mpA map, then so is any homeomorphism
    topologically conjugate to it.
\end{rem}

\section{Subtartans}

This short section describes some useful constructions of subtartans.

\begin{defn}[Subtartan] \label{defn:subtartan}
    Let~$R=(R^s, R^u)$ and $S=(S^s, S^u)$ be tartans. We say that~$S$ is a
    \emph{subtartan} of~$R$ if every stable (respectively unstable) fiber of~$S$ is
    contained in a stable (respectively unstable) fiber of~$R$.
\end{defn}

The simplest way to obtain a subtartan of~$R$ is just to discard some of its
fibers: if $S^s$ and $S^u$ are Borel subsets of $R^s$ and $R^u$ which are
unions of fibers of~$R$, then $S$ is a subtartan of~$R$, which is clearly
compatible if~$R$ is.

Another construction is to take a rectangle bounded by segments of two stable
and two unstable fibers of~$R$, and to form a subtartan whose fibers are the
intersections of fibers of~$R$ with this rectangle.

\begin{defn}[$R(x,y)$] \label{defn:subtartan-1}
    Let $R$ be a compatible tartan, and let $x,y\in R^\pitchfork$ lie on different
    stable and unstable fibers of~$R$. Let $D$ be the closed disk bounded by the
    arcs $[x, \fs(x)\pitchfork\fu(y)]_s$, $[\fs(x)\pitchfork\fu(y), y]_u$, $[y,
                \fs(y)\pitchfork\fu(x)]_s$, and $[\fs(y)\pitchfork\fu(x), x]_u$. We define
    $R(x,y)$ to be the subtartan of~$R$ whose stable and unstable fibers are the
    intersections of the stable and unstable fibers of~$R$ with~$D$.

    It is easily seen that $R(x,y)$ is compatible, and that
    $w^s(R(x,y))=\nu_{\fs(x)}([x, \fs(x)\pitchfork\fu(y)]_s)$ and
    $w^u(R(x,y))=\nu_{\fu(x)}([x, \fs(y)\pitchfork \fu(x)]_u)$.
\end{defn}

We can also construct subtartans which consist of all the fibers of~$R$ which
intersect given stable and unstable fibers in prescribed points. We start by
defining how to \emph{trim} a tartan to remove `loose' segments of fibers.

\begin{defn}[$\trim(R)$] \label{defn:trim}
    A tartan~$R$ can be \emph{trimmed} to yield a subtartan $\trim(R)$, by throwing
    away the ends of its fibers which contain no intersections. Each stable fiber
    $\fs$ of~$R$ yields a stable fiber $\fs'$ of $\trim(R)$, which is the minimal
    (open, closed, or half-open) subarc of~$\fs$ which contains $E^\fs$, and
    analogously for unstable fibers.
\end{defn}

\begin{defn}[$R(I^s, I^u)$]
    Let~$R = (R^s, R^u)$  be a positive measure oriented compatible tartan, and let
    $\fs$ and $\fu$ be stable and unstable fibers of~$R$.

    For each $x\in E^\fs = \fs\cap R^\pitchfork$, we denote by $\iota_x$ the
    initial (with respect to the given orientation of~$\fs$) stream arc of $\fs$
    with terminal endpoint~$x$ (which may be open or closed at its initial point,
    according as~$\fs$ is). Given a subinterval $I^s \subset [0,1]$, define
    $A^s(I^s)\subset E^\fs$ by
    \[
        A^s(I^s) = \{x\in E^\fs\,:\,
        \nu_\fs(\iota_x \cap R^\pitchfork) / w^s(R) \in I^s\}.
    \]

    Likewise, if $I^u\subset [0,1]$ is a subinterval, we define a corresponding
    subset $A^u(I^u)$ of~$E^\fu$. We then define $R(I^s, I^u)$ by trimming the
    subtartan of~$R$ whose stable fibers are $\fs(y)$ for $y\in A^u(I^u)$, and
    whose unstable fibers are $\fu(x)$ for $x\in A^s(I^s)$.

    The definition is independent of the choice of fibers $\fs$ and $\fu$
    by Lemma~\ref{lem:hol_inv}.
\end{defn}

For example, $S = R([1/3, 2/3], [1/3, 2/3])$ is the `middle thirds' subtartan
of~$R$, with $\mu(S^\pitchfork) = \mu(R^\pitchfork)/9$ (in contrast to the
middle thirds Cantor set, here we \emph{retain} the middle thirds of the stable
and unstable fibers).

\section{Density}

Let~$\alpha$ be an oriented stable or unstable stream arc with stream
measure~$\nu$. Then there is a \emph{stream metric~$d_\alpha$} on~$\alpha$
defined by $d_\alpha(x,y) = \nu([x, y]_\alpha)$, with respect to which~$\alpha$
is measure-preserving and orientation-preserving isometric to a finite
interval~$I$ in~$\R$ with Lebesgue measure --- in one dimension, metrics and OU
measures are equivalent. In particular, we can apply the Lebesgue Density Theorem
to stream arcs. In this section we present some consequences which will be used
later. In Lemma~\ref{lem:density-pts} we show that, for a compatible tartan~$R$,
the set $\den_1(R)$ of points of~$R^\pitchfork$ which are (two-sided) density
points of $R^\pitchfork$ along both stable and unstable fibers has full measure:
moreover, the same is true for $\den_2(R)$, the set of points which are density
points of $\den_1(R)$ along both fibers. Lemma~\ref{lem:density_intervals}
concerns points~$x$ for which all small stream arcs with endpoint~$x$ contain a
high density of points of~$R^\pitchfork$.

\begin{notn}
    Let $\varphi\colon I\to \alpha$ be such a measure-preserving and
    orientation-preserving isometry from an interval in~$\R$ to an oriented stream
    arc. Pick some $x=\varphi(\hx)\in\alpha$. If $a\in\R$ is small enough that
    $\hx+a\in I$, then we denote by $x + a$ the point $\varphi(\hx + a)$ of
    $\alpha$.

    When we write, for example, $[x, x+a]_s$ (respectively $[x, x+a]_u$), we mean
    the stream arc with endpoints $x$ and $x+a$ on the stable (respectively
    unstable) streamline through~$x$: by construction, these arcs have stream
    measure~$|a|$. In practice the orientation of~$\alpha$ will be irrelevant,
    since the definitions which follow are invariant under $a\mapsto -a$.
\end{notn}

\begin{defn}[$\den_k(R)$]
    Let~$R$ be a compatible tartan. We define $\den_k(R)\subset R^\pitchfork$, the
    \emph{set of level~$k$ density points of~$R$}, inductively for $k\ge 0$ by
    $\den_0(R) = R^\pitchfork$ and $\den_k(R) = \den_k^s(R) \cap \den_k^u(R)$,
    where
    \[
        \den_k^{s/u}(R) = \{x\in \den_{k-1}(R) \,:\,
        \lim_{a\to 0}
        \frac{\nu^{s/u}_x([x, x+a]_{s/u} \cap \den_{k-1}(R))}{|a|}
        =1
        \}.
    \]
\end{defn}

That is, an element~$x$ of~$\den_k(R)$ is an element of~$R^\pitchfork$ which is a
(2-sided) density point of~$\den_{k-1}(R)$ along both $\fs(x)$ and $\fu(x)$.

\begin{lem} \label{lem:density-pts}
    Let~$R$ be a compatible tartan. Then $\mu(\den_k(R)) = \mu(R^\pitchfork)$ for
    all~$k\ge0$.
\end{lem}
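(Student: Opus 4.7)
The proof proceeds by induction on $k$, with the base case $k=0$ immediate from $\den_0(R)=R^\pitchfork$. For the inductive step, assume $\mu(\den_{k-1}(R))=\mu(R^\pitchfork)$. I would show that $\mu(\den_k^s(R))=\mu(R^\pitchfork)$; the analogous statement for $\den_k^u(R)$ follows symmetrically, and since $\den_k^s(R)$ and $\den_k^u(R)$ are then both full-measure subsets of $R^\pitchfork$, their intersection $\den_k(R)$ also has measure $\mu(R^\pitchfork)$. Since $\mu(R^\pitchfork\setminus\den_{k-1}(R))=0$ by the inductive hypothesis, it suffices to show $\mu(\den_{k-1}(R)\setminus\den_k^s(R))=0$.

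The strategy is fiber-wise: apply the Lebesgue Density Theorem on each stable fiber, then integrate via the disintegration formula~\eqref{eq:int_formula2}. Fix a stable fiber $\fs$ of $R$ and set $B_\fs=\den_{k-1}(R)\cap\fs$, a Borel subset of $E^\fs$. Since the stream metric identifies $\fs$ with a finite interval carrying Lebesgue measure, the LDT gives that $\nu_\fs$-a.e.\ $x\in B_\fs$ is a two-sided density point of $B_\fs$ in $\fs$. For such an $x$ lying in the interior of $\fs$---and the at most two endpoints of $\fs$ are $\nu_\fs$-null, since stream measures are non-atomic---one has $[x,x+a]_s\subset\fs$ for all sufficiently small $|a|$, so
\[
[x,x+a]_s\cap\den_{k-1}(R)=[x,x+a]_s\cap B_\fs,
\]
and hence the density of $\den_{k-1}(R)$ along the stable streamline through $x$ coincides with the density of $B_\fs$ in $\fs$ at $x$, namely $1$. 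Thus $x\in\den_k^s(R)$, and therefore $\nu_\fs(B_\fs\setminus\den_k^s(R))=0$ for every stable fiber $\fs$.

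Integrating over unstable fibers using~\eqref{eq:int_formula2} then yields
\[
\mu(\den_{k-1}(R)\setminus\den_k^s(R))=\int_{E^\fu}\nu_{\fs(y)}\bigl(B_{\fs(y)}\setminus\den_k^s(R)\bigr)\,d\nu_\fu(y)=0,
\]
completing the inductive step. The only technical point that requires care is the distinction between density of $\den_{k-1}(R)$ along the entire streamline (as in the definition of $\den_k^s(R)$) and density of $B_\fs$ within the fiber $\fs$ (where the LDT directly applies); the interior-point observation bridges the two. A routine check that each $\den_k^{s/u}(R)$ is Borel measurable---each being defined by the existence and value of a limit of Borel functions---is needed to legitimise the application of~\eqref{eq:int_formula2}.
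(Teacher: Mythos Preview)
Your proof is correct and follows essentially the same route as the paper: induction on~$k$, the Lebesgue Density Theorem applied fiberwise, and integration via the disintegration formula~\eqref{eq:int_formula2}. The only difference is organizational: the paper first isolates the set~$T\subset E^\fu$ of fibers on which $\den_{k-1}(R)$ already has full $\nu_{\fs(y)}$-measure and then applies the LDT on those fibers, whereas you apply the LDT directly on every fiber to obtain $\nu_\fs(B_\fs\setminus\den_k^s(R))=0$ and integrate this vanishing quantity. Your version is marginally more streamlined, since it avoids the auxiliary set~$T$, but the underlying argument is the same.
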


\begin{proof}
    The proof is by induction on~$k$, with the case $k=0$ vacuous. We can assume
    that $\mu(R^\pitchfork)>0$.

    Let $\fu$ be any unstable fiber of~$R$, and define
    \[
        T = \{y\in E^\fu\,:\,
        \nu_{\fs(y)}(\den_{k-1}(R)\cap\fs(y)) = w^s(R)\}.
    \]
    Since $\mu(R^\pitchfork) - \mu(\den_{k-1}(R)) = 0$ by the inductive hypothesis,
    we have, by~\eqref{eq:int_formula2},
    \[
        \int_{E^\fu} \left(w^s(R) - \nu_{\fs(y)}(\den_{k-1}(R)\cap \fs(y))\right)
        d\nu_u(y) = 0,
    \]
    and so $\nu_\fu(T)=\nu_\fu(E^\fu)=w^u(R)$.

    Let $y\in T$. Since $\den_{k-1}^s(R)\cap \fs(y) =
        \den_{k-1}^s(R)\cap E^{\fs(y)}$, we have
    \[
        \den^s_k(R)\cap \fs(y) = \{
        x\in\den_{k-1}(R)\cap E^{\fs(y)} \,:\,
        \lim_{a\to 0}
        \frac{\nu^s_x([x, x+a]_s \cap (\den_{k-1}(R)\cap E^{\fs(y)}))}{|a|}
        =1
        \}.
    \]
    Therefore, by the Lebesgue Density Theorem we have, writing $\ell$ for the
    streamline containing $\fs(y)$,
    \begin{eqnarray*}
        \nu_{\fs(y)}(\den_k^s(R)\cap \fs(y))
        &=& \nu_\ell(\den_k^s(R)\cap \fs(y))
        = \nu_\ell(\den_{k-1}(R)\cap E^{\fs(y)})\\
        &=& \nu_{\fs(y)}(\den_{k-1}(R)\cap\fs(y)) = w^s(R).
    \end{eqnarray*}

    It follows from~\eqref{eq:int_formula2} that
    \[
        \mu(\den_k^s(R)) = \int_T \nu_{\fs(y)}(\den_k^s(R) \cap\fs(y)) d\nu_\fu(y)
        = w^u(R)w^s(R) = \mu(R^\pitchfork).
    \]

    The same argument shows that $\mu(\den_k^u(R)) = \mu(R^\pitchfork)$, and the
    result follows.
\end{proof}

We now restrict to level~1 density points, and consider the set of points~$x\in
    R^\pitchfork$ for which the density of intersections in all small stable and
unstable stream arcs with endpoint~$x$ is at least some prescribed value.

\begin{defn}[$X_{\delta, \eta}(R)$]
    \label{def:density_intervals}
    Let~$R$ be a compatible tartan. Given $\delta\in(0,1)$ and $\eta>0$, we define
    \[
        X_{\delta, \eta}^{s/u}(R) =
        \{
        x\in R^\pitchfork\,:\,
        \nu_x^{s/u}([x, x+a]_{s/u}\cap R^\pitchfork) \ge
        (1-\delta)|a| \text{ for all }a\in (-\eta, \eta)
        \},
    \]
    and
    \[
        X_{\delta, \eta}(R) =
        X_{\delta, \eta}^{s}(R) \cap X_{\delta, \eta}^{u}(R).
    \]
\end{defn}

\begin{lem}\label{lem:density_intervals}
    Let~$R$ be a compatible tartan and $\delta\in(0,1)$. Then
    \[
        \lim_{\eta\to 0}\mu(X_{\delta, \eta}(R)) = \mu(R^\pitchfork).
    \]
\end{lem}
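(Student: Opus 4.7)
The plan is to recognize this as a straightforward consequence of Lemma~\ref{lem:density-pts} (for $k=1$), once we note that every density point of~$R^\pitchfork$ along both fibers eventually satisfies the uniform density lower bound defining~$X_{\delta, \eta}(R)$ for all sufficiently small~$\eta$.

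First, observe that the family $\{X_{\delta, \eta}(R)\}_{\eta>0}$ is monotone: if $\eta_1 < \eta_2$, then the condition defining membership in~$X_{\delta,\eta_1}(R)$ quantifies over the smaller interval $(-\eta_1, \eta_1)$ and is therefore weaker, so $X_{\delta, \eta_2}(R) \subseteq X_{\delta, \eta_1}(R)$. Hence $\eta\mapsto \mu(X_{\delta,\eta}(R))$ is monotone non-increasing in~$\eta$, and the limit as $\eta\to 0$ exists and equals $\mu\left(\bigcup_{n\in\N} X_{\delta, 1/n}(R)\right)$ by continuity of measure from below.

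Next, I claim that $\den_1(R) \subseteq \bigcup_{n\in\N} X_{\delta, 1/n}(R)$. Indeed, if $x\in\den_1(R)$ then by definition
\[
    \lim_{a\to 0} \frac{\nu^s_x([x, x+a]_s \cap R^\pitchfork)}{|a|} = 1
    \quad\text{and}\quad
    \lim_{a\to 0} \frac{\nu^u_x([x, x+a]_u \cap R^\pitchfork)}{|a|} = 1,
\]
so there exist $\eta_s, \eta_u > 0$ such that the stable (respectively unstable) ratio is at least $1-\delta$ for all $|a| < \eta_s$ (respectively $|a|<\eta_u$). Taking $\eta = \min(\eta_s, \eta_u) > 0$ gives $x\in X_{\delta, \eta}(R) \subseteq X_{\delta, 1/n}(R)$ for any integer $n \ge 1/\eta$.

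Combining these two observations with Lemma~\ref{lem:density-pts}, which gives $\mu(\den_1(R)) = \mu(R^\pitchfork)$, yields
\[
    \lim_{\eta\to 0}\mu(X_{\delta, \eta}(R))
    = \mu\left(\bigcup_{n\in\N} X_{\delta, 1/n}(R)\right)
    \ge \mu(\den_1(R)) = \mu(R^\pitchfork),
\]
while the reverse inequality is immediate from $X_{\delta,\eta}(R)\subseteq R^\pitchfork$. The only potential technical wrinkle is measurability of the sets $X_{\delta,\eta}^{s/u}(R)$, since their definition quantifies over uncountably many~$a$; this is harmless because, for fixed~$x$, the function $a\mapsto \nu^{s/u}_x([x, x+a]_{s/u}\cap R^\pitchfork)$ is continuous in~$a$, so the defining condition is equivalent to its restriction to a countable dense set of~$a$, and standard Fubini/section arguments as in the proof of Lemma~\ref{lem:density-pts} confirm joint measurability.
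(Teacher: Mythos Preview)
Your proof is correct and follows essentially the same route as the paper's: monotonicity of the sets $X_{\delta,\eta}(R)$ in~$\eta$, continuity of measure, and the containment $\den_1(R)\subset\bigcup_{\eta>0}X_{\delta,\eta}(R)$ combined with Lemma~\ref{lem:density-pts}. You simply spell out the containment argument and the measurability point that the paper leaves implicit.
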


\begin{proof}
    Note that if $\eta'<\eta$ then $X_{\delta, \eta}(R) \subset X_{\delta,
                \eta'}(R)$. Let
    \[
        X_\delta(R) = \bigcup_{\eta>0} X_{\delta, \eta}(R) = \bigcup_{k\ge 1}
        X_{\delta, 1/k}(R).
    \]
    By continuity of measure, it suffices to show that $\mu(X_\delta(R)) =
        \mu(R^\pitchfork)$. This follows from Lemma~\ref{lem:density-pts} since
    $\den_1(R)\subset X_\delta(R)$, as can be easily shown.
\end{proof}

\section{Positive measure tartans in open sets} \label{sec:pos-meas-tart}

Recall (Definition~\ref{defn:full-turb}) that for a transverse pair of turbulations
to be \emph{full}, there are two requirements: first, that there is a countable
collection of compatible tartans whose intersections cover a full measure subset
of~$\Sigma$; and second, that every non-empty open set contains a positive measure
compatible tartan. We now discuss conditions which, combined with the first of
these requirements, imply the second.

Let~$U$ be a non-empty open subset of~$\Sigma$. Since $\mu$ is OU we have
$\mu(U)>0$, and hence, by the first requirement, there is a tartan~$R$ with
$\mu(R^\pitchfork \cap U) > 0$. By Lemma~\ref{lem:density-pts} it follows that
$\mu(\den_2(R) \cap U) > 0$. Let~$x\in\den_2(R)\cap U$.

For each~$i\ge 1$, pick~$a_i>0$ such that $[x, x+a_i]_s$ has diameter less
than~$1/i$. By definition of~$\den_2(R)$, we can pick $y_i\in (x, x+a_i]_s \cap
    \den_1(R)$: moreover, we have $\nu_{\fs(x)}([x, y_i]_s \cap R^\pitchfork) \ge
    \nu_{\fs(x)}([x, y_i]_s \cap \den_1(R)) > 0$.

Similarly, given~$i$, for each~$j\ge 1$ we pick~$b_{i,j}>0$ such that $[y_i, y_i +
            b_{i, j}]_u$ has diameter less than~$1/j$. Since~$y_i\in \den_1(R)$, we can pick
$z_{i,j}\in(y_i, y_i+b_{i,j}]_u\cap R^\pitchfork$; and $\nu_{\fu(y_i)}([y_i, z_{i,
            j}]_u \cap R^\pitchfork) > 0$.

For each~$i$, the diameter of $[x, \fs(z_{i,j})\pitchfork \fu(x)]_u$ tends to zero
as $j\to\infty$. For, given~$\epsilon>0$, let~$\delta>0$ be such that $[x,
            x+\delta]_u$ has diameter less than~$\epsilon$, and let $w\in(x, x+\delta]_u\cap
    R^\pitchfork$ (such a point exists since $x\in\den_2^u(R)$). Let~$j$ be large
enough that $1/j$ is less than the diameter of $[y_i, \fu(y_i)\pitchfork
            \fs(w)]_u$: then $z_{i,j}\in [y_i, \fu(y_i)\pitchfork \fs(w)]_u$, so that
$\fs(z_{i,j})\pitchfork \fu(x) \in[x, x+\delta]_u$ by
Definitions~\ref{defn:tartan}(b).

Each $i\ge 1$ and $j\ge 1$ yields a compatible subtartan $R(x, z_{i,j})$ of~$R$,
with measure $\nu_{\fs(x)}([x, y_i]_s \cap R^\pitchfork)\, \nu_{\fu(y_i)}([y_i,
        z_{i, j}]_u \cap R^\pitchfork) > 0$. If~$i$ and~$j$ are sufficiently large then the
arcs $[x, y_i]_s$, $[y_i, z_{i,j}]_u$, and $[x, \fs(z_{i,j})\pitchfork\fu(x)]_u$
are contained in~$U$. Thus it suffices to show that we can choose~$i$ and~$j$ so
that the remaining bounding arc $[z_{i, j},\fs(z_{i,j})\pitchfork\fu(x)]_s$ of
$R(x, z_{i, j})$ is also contained in~$U$.  This is the step that requires an
additional regularity condition on tartans or on the measured turbulations.

The simplest such condition just says that the result we need is true:
\begin{defn}[Regular tartan] \label{defn:regular-tartan}
    We say that a tartan~$R$ is \emph{regular} if for all $x\in R^\pitchfork$ and
    all neighborhoods~$U$ of~$x$, there is some~$\delta>0$ such that if $y\in
        \fs(x)\cap R^\pitchfork$ and $z\in \fu(x)\cap R^\pitchfork$ with
    $\nu_{\fs(x)}([x,y]_s)<\delta$ and $\nu_{\fu(x)}([x,z]_u)<\delta$, then
    $R(y,z)\subseteq U$ (that is, all of the fibers of $R(y,z)$ are contained
    in~$U$).
\end{defn}

A second condition which is perhaps more natural, in that it relates metric to stream measure within tartans, is:

\begin{defn}[Bi-Lipschitz regular tartan]\label{defn:biLipschitz-reg-tartan}
    We say that a tartan~$R$ is \emph{(stable) bi-Lipschitz regular} if there is a
    metric~$d$ on~$\Sigma$ compatible with its topology and a constant~$K\ge 1$
    such that whenever $x$ and $y$ lie on the same stable fiber of~$R$, we have
    \[
        \frac{1}{K} \, d(x,y) \le \nu_s([x,y]_s) \le K d(x,y).
    \]
\end{defn}

\medskip

A final condition is of interest because it can be expressed solely in terms of a
single turbulation.

\begin{defn}[Partial flowboxes]\label{defn:partial-flowbox}
    A turbulation $\cT$ \emph{has partial flowboxes} if whenever
    \begin{itemize}
        \item $x$ and $y$ lie on the same streamline~$\ell$ of~$\cT$, and
        \item there are arcs $\alpha_x$ and $\alpha_y$ in~$\Sigma$, transverse
              to~$\ell$ at $x$ and~$y$, which contain transverse intersections with
              streamlines of~$\cT$ arbitrarily close to~$x$ and~$y$ on the same side
              of~$\ell$,
    \end{itemize}
    then every neighborhood~$U$ of the stream arc $[x,y]$ contains other stream
    arcs with endpoints on $\alpha_x$ and $\alpha_y$.
\end{defn}

\begin{lem}[Conditions for positive measure tartans in open sets]
    \label{lem:pos-cond} Let $(\cT^s, \nu^s)$ and $(\cT^u, \nu^u)$ be a transverse
    pair of measured turbulations which satisfy Definition~\ref{defn:full-turb}(a),
    namely that there is a countable collection~$R_i$ of compatible tartans with
    $\mu(\bigcup_i R_i^\pitchfork)=1$. Then the pair also satisfies
    Definition~\ref{defn:full-turb}(b), and so is full, if any of the following
    holds:
    \begin{enumerate}[(a)]
        \item The tartans in Definition~\ref{defn:full-turb}(a) can be chosen to
              be regular; or

        \item The tartans in Definition~\ref{defn:full-turb}(a) can be chosen to
              be bi-Lipschitz regular; or

        \item The turbulations have partial flowboxes.
    \end{enumerate}
\end{lem}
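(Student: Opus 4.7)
The plan in all three cases is to carry through to completion the construction laid out in the paragraphs preceding the lemma. One picks a compatible tartan $R$ from Definition~\ref{defn:full-turb}(a) with $\mu(R^\pitchfork\cap U)>0$, a density point $x\in\den_2(R)\cap U$, and then $y_i\in\fs(x)\cap\den_1(R)$ and $z_{i,j}\in\fu(y_i)\cap R^\pitchfork$ such that, for large $i,j$, the arcs $[x,y_i]_s$, $[y_i,z_{i,j}]_u$ and $[x,z'_{i,j}]_u$ all lie in $U$, where $z'_{i,j}:=\fs(z_{i,j})\pitchfork\fu(x)$. The subtartan $R(x,z_{i,j})$ is automatically compatible and of positive measure; the only boundary arc not yet known to lie in $U$ is the stable segment $[z_{i,j},z'_{i,j}]_s$ on $\fs(z_{i,j})$, and each of (a)--(c) supplies the additional input needed to place it in $U$ too.

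Case (a) is immediate from Definition~\ref{defn:regular-tartan}: applied at $x$ to the neighborhood $U$, it yields $\delta>0$ such that, for any $y\in\fs(x)\cap R^\pitchfork$ and $z\in\fu(x)\cap R^\pitchfork$ within stream measure $\delta$ of $x$, $R(y,z)\subseteq U$. Such $y,z\neq x$ exist because $x\in\den_1(R)$, and the resulting $R(y,z)$ has $\mu$-measure $\nu_{\fs(x)}([x,y]_s)\,\nu_{\fu(x)}([x,z]_u)>0$.

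For case (b) I would control the diameter of the offending arc by bi-Lipschitz followed by tameness. The construction gives $d(z_{i,j},x)<1/i+1/j$, and the paragraph before the lemma shows $d(z'_{i,j},x)\to 0$ as $j\to\infty$ for fixed $i$, so $d(z_{i,j},z'_{i,j})\to 0$. By Definition~\ref{defn:biLipschitz-reg-tartan}, $\nu^s([z_{i,j},z'_{i,j}]_s)\le K\,d(z_{i,j},z'_{i,j})\to 0$; and since a stream sub-arc of small stream measure has small diameter (every prefix of a small-measure stream arc still has small measure, so Definition~\ref{defn:tame} applies), the diameter of $[z_{i,j},z'_{i,j}]_s$ tends to zero. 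Because $z_{i,j}\in U$, the arc lies in $U$ for sufficiently large $i,j$.

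Case (c) is the most delicate, and I expect it to be the main obstacle, because partial flowboxes is a property of the turbulations rather than of $R$, so the stream arcs it produces need not be subarcs of $R$-fibers. My plan is to apply Definition~\ref{defn:partial-flowbox} to $[x,y_i]_s$ in $\cT^s$ with transverse arcs $\alpha_x\subset\fu(x)$ and $\alpha_{y_i}\subset\fu(y_i)$ extending from $x$ and $y_i$ on the sides toward $z'_{i,j}$ and $z_{i,j}$; the hypothesis on nearby transverse intersections with stable streamlines holds because $x\in\den_2(R)$ and $y_i\in\den_1(R)$ give density of $R^\pitchfork$ on the unstable arcs, each such point carrying a stable fiber of $R$ transverse to its unstable fiber. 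The conclusion is that any open $V$ with $[x,y_i]_s\subset V\subset U$ contains at least one other stable stream arc with endpoints on $\alpha_x$ and $\alpha_{y_i}$. The hard part is to upgrade this single stream arc to a stable fiber of $R$ through some $\tilde z\in R^\pitchfork\cap\fu(y_i)$ near $y_i$ whose full segment to $\fu(x)$ lies in $U$; my approach is to combine the order-consistency of Definitions~\ref{defn:tartan}(b) with a Jordan-type argument inside the topological disk containing $R^s\cup R^u$, showing that every stable fiber of $R$ crossing $\fu(x)$ and $\fu(y_i)$ between $\fs(x)$ and the nearby arc produced by partial flowboxes is trapped inside $V$. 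Combined with the positive $\nu_{\fu(y_i)}$-measure of $R^\pitchfork$ arbitrarily close to $y_i$ (from $y_i\in\den_1(R)$), this selects a suitable $\tilde z$, and then $R(x,\tilde z)$ is the desired positive measure compatible subtartan in $U$.
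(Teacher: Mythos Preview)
Your arguments for (a) and (b) are essentially the paper's. One quibble in (b): you invoke tameness (Definition~\ref{defn:tame}) to pass from small stream measure to small diameter of $[z_{i,j},z'_{i,j}]_s$, but tameness is not among the hypotheses of this lemma. The paper instead uses the \emph{other} direction of the bi-Lipschitz inequality, $d(z,v)\le K\,\nu^s([z,v]_s)$, applied to each $v\in[z,w]_s$, to bound the diameter directly; this is a one-line fix to your argument.

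For (c) you have inverted the paper's assessment of difficulty: the paper dispatches this case in a single sentence, saying that taking $\alpha_x=\fu(x)$ and $\alpha_y=\fu(y)$ in Definition~\ref{defn:partial-flowbox} ``immediately yields the result''. Your concern that the stream arc produced by the partial flowbox axiom need not be an $R$-fiber is legitimate, and your trapping argument---use the produced arc together with $[x,y_i]_s$ and short pieces of $\fu(x),\fu(y_i)$ to bound a strip in $U$, then invoke disjointness of stable streamlines and the consistent-ordering property of Definitions~\ref{defn:tartan}(b) inside the ambient disk of Definitions~\ref{defn:tartan}(d) to force any stable $R$-fiber meeting $(y_i,q]_u$ to stay in the strip, and finally use $y_i\in\den_1(R)$ to locate such a fiber---is a correct way to make the paper's ``immediately'' precise.
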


\begin{proof}
    Let~$U$ be a non-empty open subset of~$\Sigma$. As in the introduction to this
    section, we can find a tartan~$R$ and a point $x\in R^\pitchfork \cap U$; and
    points $y\in\fs(x)$, $z\in\fu(y)$, and $w=\fs(y)\pitchfork\fu(x)$ arbitrarily
    close to~$x$ such that the subtartan $R(x,z)$ is compatible and has positive
    measure, and moreover $[x,y]_s$, $[x,w]_u$, and $[y,z]_u$ are contained in~$U$.
    Thus it is only required to show that $y$, $z$, and $w$ can be chosen close
    enough to~$x$ that $[z,w]_s$ is also contained in~$U$.

    This is clear if~$R$ is regular: if~$i$ and~$j$ are sufficiently large, then
    the points~$y$ and~$w$ satisfy $\nu_{\fs(x)}([x,y]_s)<\delta$ and
    $\nu_{\fu(x)}([x,w]_u)<\delta$, where~$\delta>0$ is provided by
    Definition~\ref{defn:regular-tartan}.

    If~$R$ is stable bi-Lipschitz regular, then let~$d$ be the metric and~$K$ be
    the Lipschitz constant provided by
    Definition~\ref{defn:biLipschitz-reg-tartan}. Let~$\epsilon>0$ be small enough
    that $B_d(x, 2\epsilon)\subset U$, and let $\delta = \epsilon/K^2$. Pick $y$,
    $z$, and $w$ close enough to~$x$ that $d(z,w)<\delta$ and $d(x,z)<\epsilon$.
    Then $\nu_s([z,w]_s) < K\delta$, so that $\nu_s([z,v]_s) < K \delta$ for all
    $v\in [z,w]_s$, and hence $d(z,v) < K^2\delta = \epsilon$ for all $v\in
        [z,w]_s$. Hence every $v\in [z,w]_s$ has $d(x,v) \le d(x,z)+d(z,v) <
        2\epsilon$, and $[z,w]_s$ is contained in~$U$ as required.

    Finally, if the turbulation has partial flowboxes, then taking $\alpha_x$ and
    $\alpha_y$ to be~$\fu(x)$ and~$\fu(y)$ in Definition~\ref{defn:partial-flowbox}
    immediately yields the result.
\end{proof}

\begin{rem}
    It is enough in Lemma~\ref{lem:pos-cond}(b) for each tartan to be
    either stable bi-Lipschitz regular or unstable bi-Lipschitz regular.
    Likewise, it is enough in~(c) for just one of the turbulations to have partial
    flowboxes.
\end{rem}

\medskip\medskip

We finish this section with a first consequence of fullness.

\begin{lem}
    Let $(\cT^s, \nu^s), (\cT^u, \nu^u)$ be measurable pseudo-Anosov turbulations
    on~$\Sigma$, and let $\ell_s$ and~$\ell_u$ be stable and unstable streamlines.
    Then $\ell_s\cap\ell_u$ is dense in~$\Sigma$.
\end{lem}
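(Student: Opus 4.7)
My plan, given a non-empty open set $U\subseteq\Sigma$, is to exhibit a point of $\ell_s\cap\ell_u\cap U$. The first step is to use fullness (Definition~\ref{defn:full-turb}(b)) to plant a positive-measure compatible tartan $R=(R^s,R^u)$ with $R^s\cup R^u\subseteq U$, contained in some open topological disk $D\subseteq U$. This packages a local crossing structure inside~$U$, and the strategy is to show that both $\ell_s$ and $\ell_u$ use this crossing structure to meet inside~$U$.

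The key observation that will do the final bookkeeping is this: stable fibers of $R$ are stable stream arcs, and stable streamlines are pairwise disjoint, so if $\ell_s$ meets $R^\pitchfork$ at a single point then a whole stable fiber $\fs\subseteq\ell_s$ of $R$ is contained in $\ell_s$; and symmetrically any intersection of $\ell_u$ with $R^\pitchfork$ delivers a whole unstable fiber $\fu\subseteq\ell_u$. Assuming both inclusions hold for a common compatible tartan, the unique crossing $\fs\pitchfork\fu\in R^\pitchfork\subseteq U$ is the desired point. To arrange both inclusions simultaneously, I would pick a point $p\in\den_2(R)$ (non-empty by Lemma~\ref{lem:density-pts}) and then shrink $R$ to a sequence of positive-measure compatible subtartans $R(x_n,y_n)$ via Definition~\ref{defn:subtartan-1}, whose fibers have stream measure tending to zero and, by tameness (Definition~\ref{defn:tame}), lie in arbitrarily small neighborhoods of~$p$. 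Density of $\ell_s$ and $\ell_u$ in $\Sigma$ ensures that arcs of each streamline visit every such neighborhood, and the aim is to show that some subtartan in this family has a stable fiber inside $\ell_s$ and an unstable fiber inside $\ell_u$.

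The principal obstacle is the last assertion: passing from mere closeness of $\ell_s$ to $p$ to actual inclusion of a stable fiber of a subtartan in $\ell_s$. Density of a streamline produces isolated nearby points, not coherent approximating arcs, and a priori $\ell_s$ could live in the $\mu$-null complement of $\bigcup_i R_i^\pitchfork$. I expect the actual argument therefore to lean more heavily on tameness, using the density estimates of Lemma~\ref{lem:density_intervals} to force every short enough sub-arc of $\ell_s$ entering a neighborhood of $p\in\den_2(R)$ to have both endpoints in $R^\pitchfork$, thereby realising it as the stable fiber of a subtartan. The symmetric construction for $\ell_u$ then provides the companion unstable fiber, and combining the two yields the crossing point in $\ell_s\cap\ell_u\cap U$. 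Failing a direct construction, the alternative route is by contradiction: assume $\ell_s\cap\ell_u\cap U=\emptyset$ and use fullness~(a) of Definition~\ref{defn:full-turb} together with the abundance of positive-measure compatible tartans inside $U$ provided by Lemma~\ref{lem:pos-cond} to derive an incompatibility with the density of both streamlines.
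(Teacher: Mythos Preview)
Your plan has a genuine gap at exactly the point you flag as the ``principal obstacle'', and the tools you propose to close it do not apply. The density results in Lemmas~\ref{lem:density-pts} and~\ref{lem:density_intervals} control the density of $R^\pitchfork$ \emph{along fibers of $R$} --- that is, along stream arcs already known to carry a point of $R^\pitchfork$. They say nothing about an arbitrary streamline $\ell_s$, which has ambient measure zero and may lie entirely in the $\mu$-null complement of $\bigcup_i R_i^\pitchfork$. No amount of shrinking to subtartans $R(x_n,y_n)$ changes this: each subtartan's fibers are sub-arcs of fibers of $R$, so if $\ell_s$ misses $R^\pitchfork$ it misses every $R(x_n,y_n)^\pitchfork$ as well. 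Tameness only converts small stream measure to small diameter; it does not force a nearby streamline to share points with $R^\pitchfork$.

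The paper's argument is entirely different and purely topological. By fullness one plants a positive-measure compatible tartan in $U$, and from any two stable and two unstable fibers one extracts arcs $s_1,s_2,u_1,u_2$ bounding an open disk $V\subset U$ (chosen so that neither end of $\ell_s$ nor of $\ell_u$ is trapped in $V$). Density of $\ell_s$ puts a point of $\ell_s$ in $V$; the path component of $\ell_s\cap V$ through that point closes up to an arc $\alpha$ with endpoints on $\partial V$. Its endpoints cannot both lie on the same $u_i$, for then $\alpha\cup u_i$ would bound a sub-disk that $\ell_u$ could not enter, contradicting density of $\ell_u$. Hence $\alpha$ runs from $u_1$ to $u_2$. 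The symmetric argument gives an arc $\beta\subset\ell_u$ from $s_1$ to $s_2$, and $\alpha$ and $\beta$ must cross inside $V\subset U$. The key idea you are missing is this Jordan--curve style separation argument inside the disk: it converts density of one streamline into a topological crossing condition, without ever needing $\ell_s$ or $\ell_u$ to meet the tartan's intersection set.
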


\begin{proof}
    Suppose for a contradiction that there is a non-empty open subset~$U$
    of~$\Sigma$ with $\ell_s\cap\ell_u\cap U = \emptyset$.

    By fullness there is a positive measure compatible tartan~$R$ contained in~$U$.
    Any two stable and any two unstable fibers of~$R$ contain stable arcs $s_1,
        s_2$ and unstable arcs $u_1, u_2$ which bound an open disk~$V\subset U$. There
    are infinitely many such disks: choose one which does not contain an entire end
    of either $\ell_s$ or $\ell_u$ (recall that the streamlines are dense, but not
    necessarily bi-dense).

    Since $\ell_s$ is dense in~$\Sigma$, it contains a point~$x$ of~$V$. By the
    choice of~$V$, the closure of the path component of $\ell_s\cap V$
    containing~$x$ is an arc~$\alpha$ with endpoints in $u_1\cup u_2$. The
    endpoints of~$\alpha$ can't both lie on the same~$u_i$, since then $\alpha$ and
    $u_i$ would bound a disk which $\ell_u$ cannot enter, contradicting the
    denseness of~$\ell_u$. We conclude that there is an arc $\alpha\subset
        \overline{V}$ of $\ell_s$ with endpoints on $u_1$ and $u_2$.

    Analogously, there is an arc $\beta \subset\overline{V}$ of $\ell_u$ with
    endpoints on $s_1$ and $s_2$. Then $\alpha$ and $\beta$ intersect in $V\subset
        U$, which is the required contradiction.
\end{proof}

\section{Topological dynamics of measurable pseudo-Anosov maps}

In this section we prove our main results about the topological dynamics of
measurable pseudo-Anosov maps: that they are topologically transitive
(Theorem~\ref{thm:top-trans}) and that they have dense periodic points
(Theorem~\ref{thm:dense-per}). It follows~\cite{BBCDS} that they also exhibit
sensitive dependence on initial conditions, and are therefore chaotic in the sense
of Devaney~\cite{Devaney}.

\begin{thm} \label{thm:top-trans}
    Measurable pseudo-Anosov maps are topologically transitive.
\end{thm}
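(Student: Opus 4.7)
The plan is to show that for any non-empty open subsets $U, V \subset \Sigma$, some forward iterate $F^n(U)$ meets $V$. The argument naturally splits into a reduction to stretching unstable fibers and a density step.

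First, I would use fullness (Definition~\ref{defn:full-turb}(b)) to find positive measure compatible tartans $R_U = (R_U^s, R_U^u)$ with $R_U^s \cup R_U^u \subset U$ and $R_V = (R_V^s, R_V^u)$ with $R_V^s \cup R_V^u \subset V$. To produce $F^n(U) \cap V \neq \emptyset$ it suffices to find an iterate $n \geq 0$ for which $F^n(R_U^\pitchfork) \cap R_V^\pitchfork \neq \emptyset$. I would then fix an unstable fiber $\fu$ of $R_U$ (which has $\nu^u$-measure at least $w^u(R_U) > 0$) and reduce further to showing $F^n(\fu) \cap V \neq \emptyset$ for some~$n \geq 0$.

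Second, using the dilatation property $F(\cT^u,\nu^u)=(\cT^u,\lambda^{-1}\nu^u)$, which scales unstable stream measure of an arc by~$\lambda$ under~$F$, the stream measure of $F^n(\fu)$ grows as $\lambda^n \nu^u(\fu) \to \infty$. At the same time $F^n(\fu)$ is a bounded subarc of the dense unstable streamline $F^n(\ell^u(\fu))$, which by the lemma immediately preceding the theorem statement intersects any stable streamline (in particular one through a fiber $\fs_V$ of~$R_V$) in a dense subset of $\Sigma$.

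The main step is to combine these two facts: the unboundedly growing stream measure of $F^n(\fu)$ together with density of streamline intersections should force $F^n(\fu)$ itself, and not merely its containing streamline, to enter~$V$ for some~$n$. The main obstacle is quantitative: I must rule out the possibility that the stretched arcs $F^n(\fu)$ always avoid $V$ while winding through other regions. I expect the argument to combine tameness (so that $F^n(\fu)$ decomposes into a long chain of subarcs of uniformly small diameter) with a compactness or covering argument on~$\Sigma$, together with Poincar\'e recurrence applied to the positive measure set $R_U^\pitchfork$: recurrent orbits of $\mu$-typical tartan intersection points revisit $R_U^\pitchfork$ arbitrarily often, and via the product structure provided by compatibility and the density of $\ell^s \cap \ell^u$, this recurrence should be leveraged to place a transverse crossing of $F^n(\fu)$ with a stable fiber of~$R_V$ inside the ambient disk containing $R_V^s \cup R_V^u$, completing the proof.
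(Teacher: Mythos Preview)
Your proposal assembles the right ingredients --- fullness, expansion of unstable stream measure, density of streamlines, and Poincar\'e recurrence --- but the way you organize them leaves a genuine gap at the step you yourself flag as the ``main obstacle.'' Fixing a single unstable fiber~$\fu$ and iterating it forward gives arcs $F^n(\fu)$ of unbounded stream measure, but each lies in a \emph{different} unstable streamline (since $F$ permutes streamlines), and there is no uniform statement available saying that a sufficiently long stream arc must enter a fixed open set~$V$. Your appeal to tameness plus compactness does not supply one: tameness only says short-measure arcs have small diameter, not that long-measure arcs are $\epsilon$-dense. And forward recurrence of a point $x\in R_U^\pitchfork$ only tells you that $F^{n_i}(\fu(x))$ revisits~$U$, which says nothing about~$V$.

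The paper's proof avoids this by \emph{anchoring} the growing arcs at a single point. Rather than fixing~$\fu$, it applies Poincar\'e recurrence to~$F^{-1}$ on a middle-third subtartan $S=R([0,1],[1/3,2/3])$ of a tartan $R\subset U$: this produces a fixed $x_0\in S^\pitchfork$ and backward iterates $x_i=F^{-n_i}(x_0)\in S^\pitchfork$. The fiber of~$R$ through $x_i$ (which varies with~$i$) then maps forward under $F^{n_i}$ to an arc of the \emph{fixed} unstable streamline through~$x_0$, containing~$x_0$ with stream measure at least $\lambda^{n_i}w^u(R)/3$ on each side (this is where the middle-third restriction is used). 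Since that single streamline is dense, it contains a point $y\in V$ at some finite stream distance~$K$ from~$x_0$; choosing $n_i$ with $\lambda^{n_i}w^u(R)/3>K$ forces $y\in F^{n_i}(U)$. The key idea you are missing is this reversal: vary the starting fiber via backward recurrence so as to keep the target streamline fixed, rather than fixing the fiber and losing control of the target. Note also that the second tartan $R_V\subset V$ is unnecessary --- density of the anchored streamline suffices.
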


\begin{proof}
    (See Figure~\ref{fig:trans}.) Let $F\colon\Sigma\to\Sigma$ be measurable
    pseudo-Anosov with dilatation $\lambda>1$, and let $U$ and~$V$ be non-empty
    open subsets of~$\Sigma$. We need to show that there is some~$n>0$ with
    $F^n(U)\cap V \not=\emptyset$.

    By fullness, there is a positive measure compatible tartan~$R$ contained
    in~$U$. We write $S=R([0, 1], [1/3, 2/3])$, so that $w^s(S)=w^s(R)$ and
    $w^u(S)=w^u(R)/3$, and hence $\mu(S^\pitchfork)=\mu(R^\pitchfork)/3 > 0$.

    Applying the Poincar\'e recurrence theorem to~$F^{-1}$ yields a point $x_0\in
        S^\pitchfork$ and a sequence $n_i\to\infty$ such that $x_i=F^{-n_i}(x_0)\in
        S^\pitchfork$ for all~$i$.

    Since the unstable streamline~$\ell$ through~$x_0$ is dense in~$\Sigma$, it contains some point~$y\in V$. Write $K=\nu^u_\ell([x_0, y]_\ell)$, and pick~$i$
    large enough that $\lambda^{n_i} w^u(R) / 3 > K$.

    Let~$\fu \subset U$ be the unstable fiber of~$R$ which contains~$x_i$. Then
    $x_0\in F^{n_i}(\fu)$, and each of the two components of
    $F^{n_i}(\fu)\setminus\{x_0\}$ has stream measure at least $\lambda^{n_i}
        w^u(R) / 3 > K$. Hence $y\in F^{n_i}(\fu) \subset F^{n_i}(U)$, so that
    $F^{n_i}(U)\cap V\not=\emptyset$ as required.
\end{proof}

\begin{figure}[htbp]
    \begin{center}
        \includegraphics[width=0.8\textwidth]{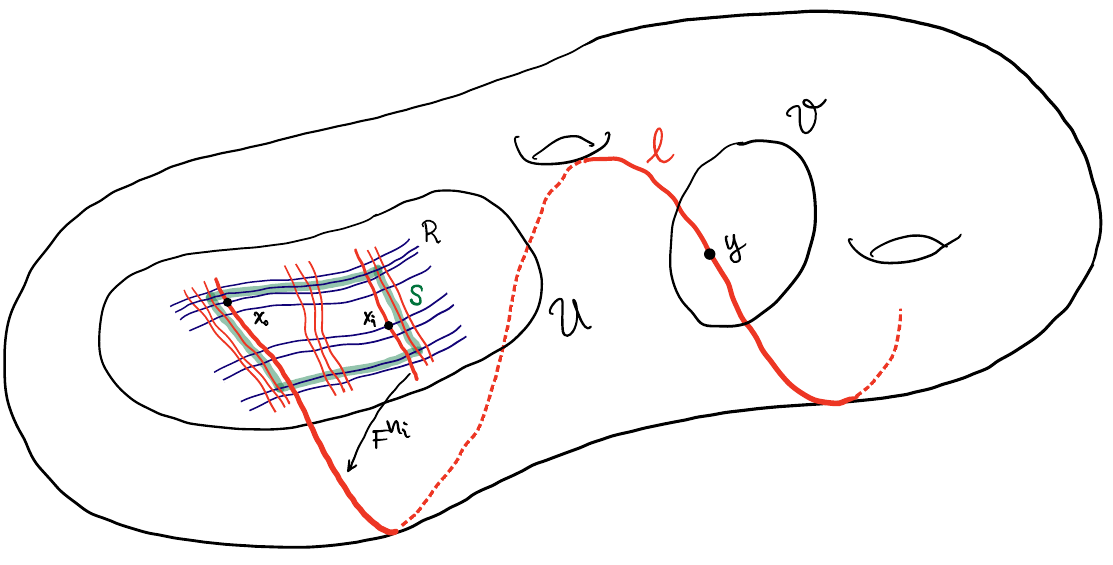}
    \end{center}
    \caption{Proof of topological transitivity}
    \label{fig:trans}
\end{figure}

The proof of density of periodic points is more challenging, and makes use of the
density result of Lemma~\ref{lem:density_intervals}.

\begin{thm} \label{thm:dense-per}
    Measurable pseudo-Anosovs have dense periodic points.
\end{thm}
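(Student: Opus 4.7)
My plan is to adapt the classical density-of-periodic-points proof for pseudo-Anosov maps, in which Poincar\'e recurrence combined with hyperbolicity forces a Markov-type crossing $F^n(R) \cap R$ from which a periodic point is extracted using the local product structure of~$R$. In the measurable setting the product structure is available only on positive-measure subsets of compatible tartans, so the fixed-point extraction will be done by a closing-lemma-style contraction argument on a single unstable fiber.

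Given a nonempty open $U \subseteq \Sigma$, take by fullness (Definition~\ref{defn:full-turb}(b)) a positive-measure compatible tartan~$R$ with $R^s \cup R^u \subseteq U$, and pass to the middle subtartan $S = R([1/3, 2/3], [1/3, 2/3])$, whose intersection points lie in the middle third of their stable and unstable fibers of~$R$. Let~$M$ be the upper bound from Definitions~\ref{defn:tartan}(c) on the stream measures of fibers of~$R$. Poincar\'e recurrence applied to~$F$ on the positive-measure set $S^\pitchfork$ gives $x_0 \in S^\pitchfork$ with $x_n := F^n(x_0) \in S^\pitchfork$ for arbitrarily large~$n$. Since~$x_0$ is in the middle third of~$\fu(x_0)$, each of the two halves of~$\fu(x_0)$ at~$x_0$ has stream measure at least $w^u(R)/3$, so each half of $F^n(\fu(x_0))$ at~$x_n$ has stream measure at least $\lambda^n w^u(R)/3$; when~$n$ is large enough that $\lambda^n w^u(R)/3 > M$, this overshoots the endpoints of $\fu(x_n)$, forcing $F^n(\fu(x_0)) \supseteq \fu(x_n)$. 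A symmetric argument on the stable side yields $F^n(\fs(x_0)) \subseteq \fs(x_n)$ for~$n$ large. Let $\alpha \subseteq \fu(x_0)$ be the unique subarc with $F^n(\alpha) = \fu(x_n)$, so $x_0 \in \alpha$.

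To extract a periodic point, define the stable holonomy $\pi \colon \alpha \cap R^\pitchfork \to \fu(x_n) \cap R^\pitchfork$ by $\pi(p) = \fs(p) \pitchfork \fu(x_n)$, stream-measure-preserving on $R^\pitchfork$ by Lemma~\ref{lem:hol_inv}. Consider the composition $g = (F^n|_\alpha)^{-1} \circ \pi$; a fixed point of~$g$ is precisely a point $p \in \alpha$ with $F^n(p) \in \fs(p)$. Since $(F^n|_\alpha)^{-1}$ contracts unstable stream measure by $\lambda^{-n}$ while~$\pi$ preserves stream measure, $g$ is a $\lambda^{-n}$-contraction in stream metric. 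Once such a fixed point $p^*$ is produced, iterate $F^n$: because $F^n(p^*) \in \fs(p^*)$ lies on the stable streamline $\ell$ through~$p^*$, and $F^n$ maps stable streamlines to stable streamlines, the entire forward orbit $\{F^{kn}(p^*)\}_{k \ge 0}$ lies on~$\ell$ and contracts by $\lambda^{-n}$ per step in stream metric. It is therefore stream-Cauchy, hence Cauchy in~$\Sigma$ by tameness (Definition~\ref{defn:tame}), converging to a fixed point $p^{**}$ of $F^n$ which lies in~$U$ by its proximity to~$x_0$ when~$n$ is large.

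The main obstacle is justifying the contraction property of~$g$ rigorously and extending it continuously to $\overline{\alpha}$. Lemma~\ref{lem:hol_inv} guarantees that~$\pi$ preserves only the $R^\pitchfork$-restricted stream measure, whereas the stream metric on~$\alpha$ and $\fu(x_n)$ uses the full stream measure, which may differ on the ``gaps'' of fibers outside $R^\pitchfork$. I would handle this by restricting attention to the full-measure set $\den_1(R) \cap \alpha$ of density points (Lemma~\ref{lem:density-pts}), on which Lemma~\ref{lem:density_intervals} makes the gap measure uniformly negligible in small stream arcs, so that~$\pi$ acts as an approximate stream isometry and~$g$ is a genuine $\lambda^{-n}$-contraction. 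Continuous extension of~$g$ to $\overline{\alpha}$, together with the Banach fixed point theorem applied in the complete stream metric (under which~$\alpha$ is stream-isometric to a compact real interval), then delivers~$p^*$; tameness transfers the conclusion back to the ambient topology of~$\Sigma$.
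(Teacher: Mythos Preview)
Your setup through the construction of $\alpha\subset\fu(x_0)$ with $F^n(\alpha)=\fu(x_n)$ is correct and parallels the paper's use of recurrence on a middle-thirds subtartan. The genuine gap is in the fixed-point extraction.

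The map $g=(F^n|_\alpha)^{-1}\circ\pi$ sends $\alpha\cap R^\pitchfork$ into~$\alpha$, not into $\alpha\cap R^\pitchfork$: nothing ensures $g(p)\in R^\pitchfork$, so $g$ cannot be iterated on its natural domain, and $\alpha\cap R^\pitchfork$ is in general a totally disconnected set, not a complete interval. Your proposed remedy, a continuous extension of~$g$ to~$\overline\alpha$, is exactly where the measurable hypothesis bites. The stable holonomy between unstable fibers of a tartan has no reason to extend continuously off~$R^\pitchfork$---there is no local product structure---and Lemma~\ref{lem:density_intervals} controls only the \emph{measure} of the gaps, not the pointwise behaviour of~$\pi$ on them. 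Any ad hoc extension (linear across gaps, say) destroys the dynamical interpretation: if the Banach fixed point~$p^*$ falls in a gap, then $\fs(p^*)$ is undefined, and the equation $g(p^*)=p^*$ no longer says that $F^n(p^*)$ lies on the stable streamline through~$p^*$, so the subsequent shadowing argument never gets started. There is also a secondary issue: even granting such a~$p^*$, the stream distance from~$p^*$ to~$F^n(p^*)$ along~$\fs(p^*)$ is bounded only by~$M$, not by anything small in~$n$, so the claim that~$p^{**}\in U$ ``by proximity to~$x_0$'' needs more than tameness.

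The paper's proof avoids all of this by abandoning the contraction/shadowing philosophy in favour of the fixed point index. It constructs an honest open rectangle $V\subset U$ bounded by sub-arcs of four specific fibers of~$R$, chosen so that $F^n(\partial V)$ is again made of sub-arcs of fibers of~$R$ and $F^n(V)$ crosses~$V$ in the Markov sense; the index is then~$\pm1$, yielding a fixed point of~$F^n$ directly inside~$V$, with no need to locate it on any streamline. Lemma~\ref{lem:density_intervals} is indeed used, but for a different purpose than you propose: it guarantees four anchor points $a,b,c,d\in C^\pitchfork$ (determining~$\partial V$) whose relevant $F^{\pm n}$-images also land in~$R^\pitchfork$, in prescribed subtartans, so that both $\partial V$ and $F^n(\partial V)$ genuinely sit on fibers of~$R$.
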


\begin{proof}
    Let $F\colon\Sigma\to\Sigma$ be measurable pseudo-Anosov with
    dilatation~$\lambda$, and let~$U$ be a non-empty open subset of~$\Sigma$. We
    need to show that there is a periodic point of~$F$ in~$U$.

    By fullness, there is a positive measure compatible oriented tartan~$R$
    contained in~$U$. We will show that there is a positive integer~$n$ and an open
    rectangle~$V\subset U$ such that (see Figure~\ref{fig:rect}):
    \begin{enumerate}[(V1)]
        \item $\partial V$ is the union of two arcs $\alpha$ and $\alpha'$
              contained in stable fibers of~$R$, and two arcs $\beta$ and $\beta'$
              contained in unstable fibers of~$R$;

        \item the $F^n$-images of these boundary arcs are also contained in stable
              and unstable fibers of~$R$; and

        \item $F^n(\beta)$ and $F^n(\beta')$ both intersect both $\alpha$ and
              $\alpha'$.
    \end{enumerate}

    It follows that the fixed point index $\ind(F^n, V)$ of~$V$ under~$F^n$ is either
    $+1$ or $-1$ (depending on whether or not the orientations of the stable boundary
    arcs, and the unstable boundary arcs, are preserved by $F^n$), so that there is a
    fixed point of~$F^n$ in $V\subset U$ as required.

    Note that, if~(V1) holds, then it is immediate from the definition of measurable
    pseudo-Anosov maps that the $F^n$-images of $\alpha$, $\alpha'$, $\beta$, and
    $\beta'$ are stream arcs, but that this is not enough: we need these images to be
    contained in fibers of~$R$ so that we can control how they lie with respect to the
    original arcs.

    Define subtartans $C$, $W$, $E$, $S$, and $N$ of~$R$ (see Figure~\ref{fig:rect}) by
    \begin{eqnarray*}
        C &=& R\,([1/3, 2/3], [1/3, 2/3]), \\
        W &=& R\,([0, 1/3], [1/3, 2/3]), \\
        E &=& R\,([2/3, 1], [1/3, 2/3]), \\
        S &=& R\,([1/3, 2/3], [0, 1/3]), \text{ and}\\
        N &=& R\,([1/3, 2/3], [2/3, 1]),
    \end{eqnarray*}
    the Central, West, East, South, and North subtartans of $1/3$ the stable and
    unstable widths of~$R$, where the stable streamlines of~$R$ run from West to East
    and the unstable streamlines from South to North. We will construct the
    rectangle~$V$ so that the unstable boundary arcs $\beta$ and $\beta'$ lie in $W$
    and $E$, while the image stable boundary arcs $F^n(\alpha)$ and $F^n(\alpha')$ lie
    in $S$ and $N$ (not necessarily respectively in each case): this will ensure
    that~(V3) holds.

    \begin{figure}[htbp]
        \begin{center}
            \includegraphics[width=0.6\textwidth]{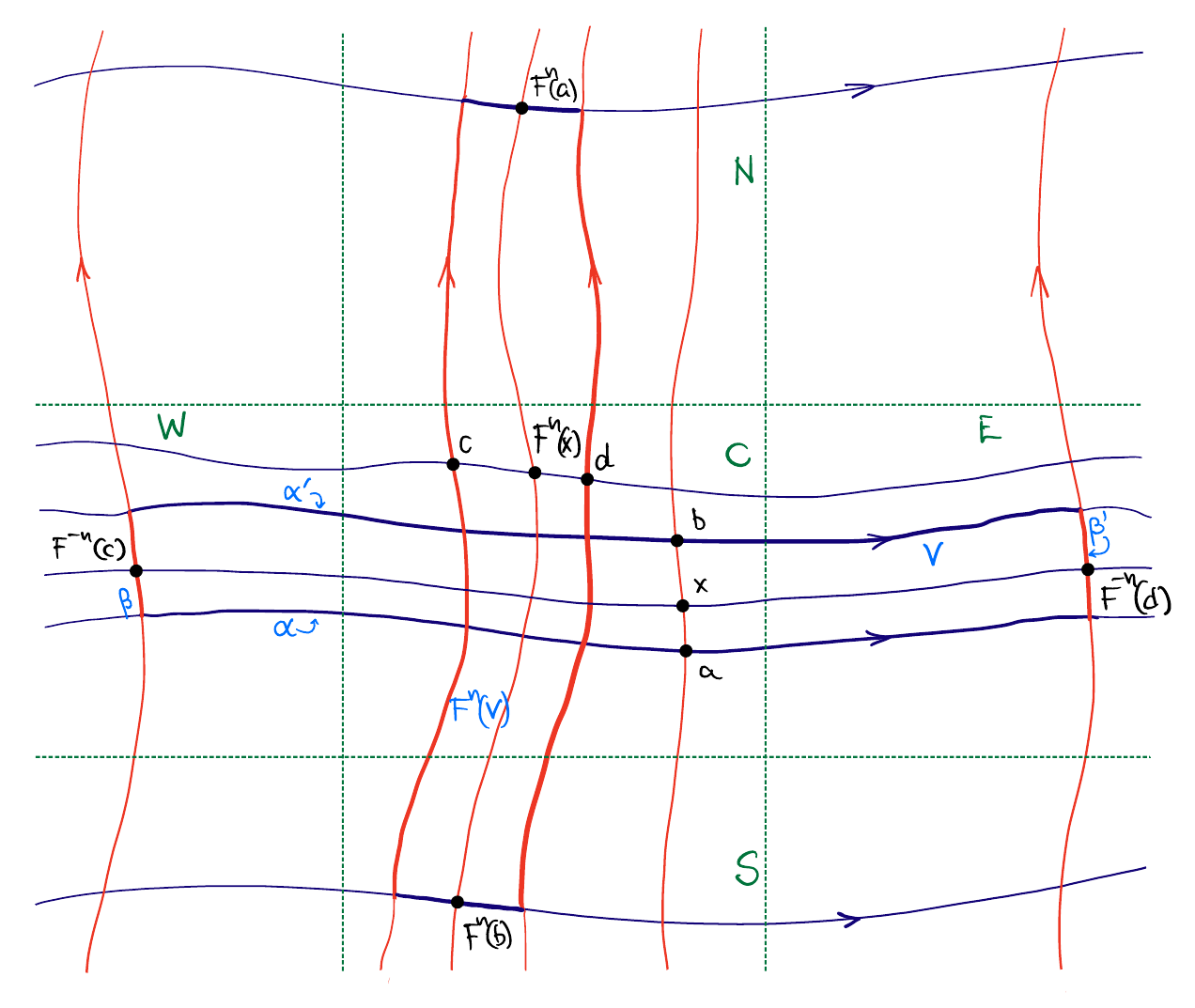}
        \end{center}
        \caption{The rectangle~$V$ and its image under $F^n$; the subdivision
            of~$R$; the points~$x$ and $F^n(x)$ and the construction of $a$, $b$, $c$, and $d$}
        \label{fig:rect}
    \end{figure}

    Let
    \begin{compactitem}
        \item $L$ be an upper bound on the stream measures of the fibers of~$R$
        (which exists by Definitions~\ref{defn:tartan}(c));

        \item $\delta\in(0,1)$ be such that $\delta < \min(w^s(R), w^u(R))/4\lambda L$;

        \item $\eta_0\in (0, L)$ be such that $\mu(X_{\delta, \eta_0}(C)) > 0$
        (see Definition~\ref{def:density_intervals} and
        Lemma~\ref{lem:density_intervals}); and

        \item $n_0>0$ be such that $\eta_0\in[\lambda^{-n_0}L, \lambda^{-n_0+1}L)$.
    \end{compactitem}

    By the Poincar\'e recurrence theorem applied to $X_{\delta, \eta_0}(C)$, there is
    an integer $n>n_0$ and a point $x\in X_{\delta, \eta_0}(C)$ such that $F^n(x)\in
        X_{\delta, \eta_0}(C)$.

    Pick $\eta\in (\lambda^{-n}L, \lambda^{-n+1}L)$. Since $\eta_0\ge \lambda^{-n_0}L$
    and $n>n_0$ we have $\eta < \eta_0$, so that $X_{\delta, \eta}(C) \supset
        X_{\delta, \eta_0}(C)$, and in particular $\{x, F^n(x)\} \subset X_{\delta,
                \eta}(C)$.

    Note that $X_{\delta, \eta}(C)\subset C^\pitchfork$, so that $x$ and $F^n(x)$ are
    contained in both stable and unstable fibers of~$C$, and hence in stable and
    unstable fibers of~$R$.

    We now use the point~$x$ to build the rectangle~$V$, starting with its stable
    boundary arcs~$\alpha$ and~$\alpha'$: we will find points $a$ and $b$ of the
    unstable streamline of~$x$, on either side of~$x$ and within stream measure~$\eta$
    of~$x$, which will lie on these arcs. The key properties are
    \begin{compactitem}
        \item that $a, b \in C^\pitchfork \subset R^\pitchfork$, so that they lie on
        stable fibers of~$R$; and
        \item that $F^n(a)$ and $F^n(b)$ lie in $S^\pitchfork \subset R^\pitchfork$ and
        $N^\pitchfork \subset R^\pitchfork$ (not necessarily respectively), so that (a)
        they also lie on stable fibers of~$R$, and (b) $F^n(s)$ and $F^n(t)$ lie in $S$
        and $N$, so that the image rectangle $F^n(V)$ stretches across~$C$ in the
        unstable direction.
    \end{compactitem}

    Let $\fu$ be the unstable fiber of~$R$ through $F^n(x)$. We have
    \mbox{$\nu^u_{F^n(x)}(\fu)\le L$} by definition of~$L$: since also $\lambda^{-n}L <
        \eta$, we have $F^{-n}(\fu) \subset (x-\eta, x+\eta)_u$.

    We first find the point $a\in(x-\eta, x)_u$. By definition of~$X_{\eta,
                \delta}(C)$, we have
    \begin{equation}\label{eq:1}
        \nu^u_x( (x-\eta, x)_u \cap C^\pitchfork) \ge (1-\delta)\eta.
    \end{equation}
    We will assume that $F^{-n}$ sends $\fu$ into the unstable fiber through~$x$
    preserving the fiber orientation, so that $F^{-n}(S^s \cap \fu) \subset (x-\eta,
        x)_u$: if it reverses fiber orientation, then we need only replace~$S$ with~$N$
    (Figure~\ref{fig:rect} depicts the orientation-reversing case). Now
    \begin{equation}\label{eq:2}
        \nu^u_x(F^{-n}(S^s\cap \fu)) = \lambda^{-n}\nu^u_{F^n(x)}(S^s \cap \fu)
        = \lambda^{-n}w^u(S) > \lambda^{-n}w^u(R)/4 > \delta\eta,
    \end{equation}
    where the final inequality comes from $\delta < w^u(R)/4\lambda L$ and $\eta <
        \lambda^{-n+1}L$.

    It follows from~\eqref{eq:1} and~\eqref{eq:2} that there is some $a\in(x-\eta,
        x)_u$ which lies in both $C^\pitchfork$ and in $F^{-n}(S^s\cap\fu)$. That is, $a\in C^\pitchfork$ and $F^n(a)\in S^\pitchfork \supset S^s\cap \fu$ as required.

    Applying the same argument to $(x, x+\eta)_u$ yields a point $b\in C^\pitchfork$
    with $F^n(b)\in N^\pitchfork$ (or $S^\pitchfork$ if $F^{-n}$ reverses fiber
    orientation).

    An exactly analogous argument applied to $(F^n(x)-\eta, F^n(x)+\eta)_s$ yields
    points $c, d\in C^\pitchfork \subset R^\pitchfork$ such that $F^{-n}(c)$ and
    $F^{-n}(d)$ lie in $W^\pitchfork\subset R^\pitchfork$ and $E^\pitchfork\subset R^\pitchfork$ (not necessarily respectively).

    The rectangle~$V$ whose boundary is contained in the stable fibers of~$R$ through
    $a$ and $b$, and the unstable fibers of~$R$ through $F^{-n}(c)$ and $F^{-n}(d)$
    then has the required properties.

\end{proof}

\section{Ergodicity} \label{sec:ergodicity}

In this section we use a variant of the Hopf argument (see, for example,
\cite{brin, burnsetal, coudene}) to prove ergodicity of measurable pseudo-Anosov
maps. In classical uses of this argument, such as to prove the ergodicity of volume
preserving Anosov diffeomorphisms, the main effort is in proving that the invariant
foliations are absolutely continuous. This makes it possible to use a Fubini
theorem to show that Birkhoff averages of $L^1$ functions are constant a.e.\ in
charts. Since the manifold is connected and each point is contained in an open
chart, this implies that the global Birkhoff average is constant a.e., yielding
ergodicity.

In the case of measurable pseudo-Anosov maps, the compatibility conditions on a
tartan~$R$ allow a Fubini argument to show that Birkhoff averages are constant
a.e.\ in $R^\pitchfork$. However, since tartans are not open sets we need an
additional hypothesis connecting tartans in order to deal with the global averages.

Let $F\colon\Sigma\to\Sigma$ be measurable pseudo-Anosov with
dilatation~$\lambda>1$, and let $\phi\colon\Sigma\to\R$ be continuous. By
Birkhoff's ergodic theorem, the limits
\begin{eqnarray*}
    \phi^+(x) &=&
    \lim_{n\to\infty} \frac{1}{n} \sum_{i=0}^{n-1}\phi(F^i(x))
    \text{ and }\\
    \phi^-(x) &=&
    \lim_{n\to\infty} \frac{1}{n} \sum_{i=0}^{n-1}\phi(F^{-i}(x))
\end{eqnarray*}
exist and are equal $\mu$-almost everywhere.

Since continuous functions are dense in $L^1$, using the converse of Birkhoff's
ergodic theorem, it suffices for ergodicity to show that for every continuous
$\phi$, the functions $\phi^+$ and $\phi^-$ are constant almost everywhere.

Throughout this section we consider a fixed such function~$\phi$.

\begin{lem} \label{lem:stream-arc-phi}
    Suppose that $x$ and $y$ lie in the same stable (respectively unstable)
    streamline~$\ell$, and that $\phi^+(x)$ (respectively $\phi^-(x)$) exists. Then
    $\phi^+(y)$ (respectively $\phi^-(y)$) exists and is equal to $\phi^+(x)$
    (respectively $\phi^-(x)$).
\end{lem}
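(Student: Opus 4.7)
The plan is to implement the standard Hopf-style argument adapted to measured turbulations, using the fact that forward iterates contract stable stream arcs uniformly together with the tameness hypothesis.

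First, I would fix the stable case; the unstable case with $F^{-1}$ in place of $F$ is entirely symmetric. Suppose $x,y\in\ell$, a stable streamline of $(\cT^s,\nu^s)$, and let $a=\nu^s_\ell([x,y]_s)$. Because $F$ preserves $\cT^s$ and $F_*\nu^s_\ell = \lambda\,\nu^s_{F(\ell)}$ by Definitions~\ref{defn:mpa}, the points $F^n(x)$ and $F^n(y)$ lie on the same stable streamline $F^n(\ell)$ and
\[
\nu^s_{F^n(\ell)}\bigl([F^n(x), F^n(y)]_s\bigr) = \lambda^{-n} a \longrightarrow 0
\]
as $n\to\infty$.

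Next I would invoke tameness of the stable turbulation (Definition~\ref{defn:tame}): given $\epsilon>0$, there is $\delta>0$ such that every stable stream arc of stream measure less than $\delta$ has diameter less than $\epsilon$ in any fixed metric $d$ on $\Sigma$ compatible with its topology. Applying this with $n$ large enough that $\lambda^{-n}a<\delta$ gives $d(F^n(x),F^n(y))<\epsilon$, so $d(F^n(x),F^n(y))\to 0$. Because $\Sigma$ is compact, $\phi$ is uniformly continuous, and therefore $|\phi(F^n(x))-\phi(F^n(y))|\to 0$.

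The final step is a Ces\`aro argument. Writing $\psi_n=\phi(F^n(x))-\phi(F^n(y))$, we have $\psi_n\to 0$, so
\[
\frac{1}{n}\sum_{i=0}^{n-1}\bigl(\phi(F^i(x))-\phi(F^i(y))\bigr) \longrightarrow 0.
\]
Since $\phi^+(x)$ exists by hypothesis, this forces the partial averages for $y$ to converge to the same limit, so $\phi^+(y)$ exists and equals $\phi^+(x)$. The unstable case is identical with $F^{-1}$, using $F^{-1}_*\nu^u_\ell=\lambda^{-1}\nu^u_{F^{-1}(\ell)}$ to contract stream measure under backward iteration, and tameness of $\cT^u$. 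No step is really an obstacle; the only thing to take care of is the correct direction of contraction in the conventions of Definitions~\ref{defn:mpa}, namely that $F$ contracts the stable stream measure (by $\lambda^{-1}$) rather than expanding it.
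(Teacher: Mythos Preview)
Your proof is correct and follows essentially the same route as the paper's: contract the stream measure of $[x,y]_s$ by $\lambda^{-n}$, use tameness to convert small stream measure into closeness so that $|\phi(F^n(x))-\phi(F^n(y))|\to 0$, and then average. The only cosmetic difference is that the paper invokes the diagonal-neighborhood formulation of tameness applied directly to $N_\epsilon=\{(a,b):|\phi(a)-\phi(b)|<\epsilon\}$, whereas you go via metric tameness plus uniform continuity of~$\phi$; these are equivalent on compact~$\Sigma$.
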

\begin{proof}
    If $x$ and $y$ lie in the same stable streamline~$\ell$ with $\nu_\ell([x,y]_s)
        = K$, then we have $\nu_{F^i(\ell)}([F^i(x), F^i(y)]_s) = K/\lambda^i$.

    Given $\epsilon>0$, let $N_\epsilon=\{(a,b)\in\Sigma\times\Sigma\,:\,
        |\phi(a)-\phi(b)| < \epsilon\}$, a neighborhood of the diagonal. By tameness
    there is a $\delta>0$ such that $(a,b)\in N_\epsilon$ whenever the stream
    measure of $[a,b]_s$ is less than~$\delta$: therefore $|\phi(F^i(x)) -
        \phi(F^i(y))| < \epsilon$ for all sufficiently large~$i$. It follows that
    $\phi^+(y)=\phi^+(x)$ as required. The argument when $x$ and $y$ are on the
    same unstable streamline is analogous.
\end{proof}

\begin{defns}[$Z$, $\sim$]
    Let
    \[
        Z = \{x\in \Sigma\,:\, \phi^+(x) \text{ and } \phi^-(x)
        \text{ exist and are equal}  \},
    \]
    so that $\mu(Z)=1$ by Birkhoff's ergodic theorem as noted above.

    Let $x, y\in Z$. We write $x\sim y$ if there is a finite collection
    $\{\alpha_1, \dots, \alpha_k\}$ of stream arcs with $x\in\alpha_1$ and
    $y\in\alpha_k$, such that $\alpha_{i}\cap\alpha_{i+1}$ contains a point
    $z_i\in Z$ for $1\le i\le k - 1$.
\end{defns}

$\sim$ is an equivalence relation on~$Z$, and $\phi^+$ and $\phi^-$ are
constant on equivalence classes. For, writing $z_0=x$ and $z_k=y$, we have for
each $1\le i\le k$ that either $\phi^+(z_{i-1})=\phi^+(z_i)$ or
$\phi^-(z_{i-1})=\phi^-(z_i)$ by Lemma~\ref{lem:stream-arc-phi}; and hence
$\phi^\pm(z_{i-1})=\phi^\pm(z_i)$ since $z_{i-1}, z_i\in Z$.

It follows that $F$ is ergodic if $\sim$ has a full measure equivalence class.

\begin{lem} \label{lem:const_on_tartan}
    Let~$R$ be a positive measure compatible tartan. Then $\phi^+=\phi^-$ is
    constant almost everywhere in~$R^\pitchfork$.
\end{lem}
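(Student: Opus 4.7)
The plan is to run the Hopf argument inside the tartan, using the compatibility condition (which provides a Fubini theorem on $R^\pitchfork$) in place of absolute continuity of the foliations in the classical setting.

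Fix stable and unstable fibers $\fs$ and $\fu$ of~$R$ and use $\psi = \psi^{\fs, \fu} \colon E^\fs \times E^\fu \to R^\pitchfork$ to identify $R^\pitchfork$ with $E^\fs \times E^\fu$. Since $\mu(Z) = 1$ and $\psi_* \nu_{\fs,\fu} = \mu|_{R^\pitchfork}$, the set $\psi^{-1}(Z \cap R^\pitchfork)$ has full $\nu_\fs \times \nu_\fu$ measure. Applying Fubini, I obtain a set $A \subseteq E^\fs$ of full $\nu_\fs$-measure such that for each $a \in A$ the slice $\{b \in E^\fu : \psi(a,b) \in Z\}$ has full $\nu_\fu$-measure, and symmetrically a set $B \subseteq E^\fu$ of full $\nu_\fu$-measure.

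For each $a \in A$, the unstable fiber $\fu(a)$ meets $Z$, so Lemma~\ref{lem:stream-arc-phi} tells me that $\phi^-$ is constant on $\fu(a) \cap Z$; call that common value $\Phi^-(a)$. Analogously define $\Phi^+(b)$ on $B$ using that $\phi^+$ is constant along $\fs(b) \cap Z$. Now whenever $(a,b) \in A \times B$ with $\psi(a,b) \in Z$ (a full-measure set in $A \times B$), one has $\psi(a,b) \in Z$, so $\phi^+(\psi(a,b)) = \phi^-(\psi(a,b))$; by construction the left side equals $\Phi^+(b)$ and the right side equals $\Phi^-(a)$. Hence
\[
    \Phi^-(a) = \Phi^+(b) \quad \text{for } \nu_\fs \times \nu_\fu\text{-a.e.\ } (a,b) \in A \times B.
\]

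The main step is then the standard Fubini trick: applying Fubini once more to the full-measure set where $\Phi^-(a) = \Phi^+(b)$, I can pick $a_0 \in A$ such that $\Phi^+(b) = \Phi^-(a_0)$ for $\nu_\fu$-a.e.\ $b \in B$. Setting $c = \Phi^-(a_0)$, it follows that $\Phi^+ \equiv c$ a.e.\ on $B$, and then (swapping the roles) $\Phi^- \equiv c$ a.e.\ on $A$. Therefore for $\mu$-a.e.\ $x = \psi(a,b) \in R^\pitchfork$ one has $\phi^+(x) = \Phi^+(b) = c = \Phi^-(a) = \phi^-(x)$, which is the claimed conclusion.

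The only real obstacle is bookkeeping: making sure that the Fubini steps are applied to genuinely $\nu_\fs$- and $\nu_\fu$-measurable sets (which is automatic from the compatibility of~$R$ and the $\mu$-measurability of~$Z$) and that the pointwise equalities $\phi^\pm(\psi(a,b)) = \Phi^\pm(\cdot)$ hold on the correct full-measure subsets. Everything else is routine once the parameterization $\psi$ is in place.
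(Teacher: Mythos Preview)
Your proof is correct and is essentially the same Hopf argument as the paper's, using compatibility of~$R$ to supply the Fubini theorem on~$R^\pitchfork$. The only difference is presentational: the paper phrases the argument geometrically via the equivalence relation~$\sim$, explicitly constructing a three-arc stream chain between any two points of a full-measure subset of~$R^\pitchfork$, whereas you carry out the textbook function-theoretic version (define~$\Phi^\pm$ on the factors and use the Fubini trick to show both are a.e.\ constant); the underlying mathematics is identical.
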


\begin{proof}
    Let $\fs$ and $\fu$ be any stable and unstable fibers of~$R$, and let $V =
        R^\pitchfork \cap Z$, a full measure subset of $R^\pitchfork$.

    By~\eqref{eq:int_formula},
    \[
        \int_{E^\fs} \nu_\fu(E^\fu)\,d\nu_\fs(x) = \mu(R^\pitchfork) = \mu(V) =
        \int_{E^\fs} \nu_{\fu}(V^\fu(x))\,d\nu_\fs(x),
    \]
    so that $W=\{x\in E^\fs\,:\, \nu_\fu(V^\fu(x)) = \nu_\fu(E^\fu)\}$ has full
    $\nu_\fs$-measure in~$E^\fs$. Let $S$ be the subtartan of~$R$ obtained by
    discarding the unstable fibers which don't intersect~$W$. Then
    $\mu(S^\pitchfork) = \mu(R^\pitchfork)$, and $\phi^+=\phi^-$ is constant on
    $S^\pitchfork \cap Z$, a full measure subset of $R^\pitchfork$. For if $a,
        a'\in S^\pitchfork\cap Z$, then let $x, x'\in W$ be such that $a\in \fu(x)$ and
    $a'\in\fu(x')$, and let $y\in V^\fu(x)\cap V^\fu(x')$. Then the stream arcs
    $[a, \fu(x)\pitchfork\fs(y)]_u$, $[\fu(x)\pitchfork\fs(y),
                \fu(x')\pitchfork\fs(y)]_s$, and $[\fu(x')\pitchfork\fs(y), a']_u$ connect $a$
    and $a'$ and intersect at points of $V\subset Z$, so that $a\sim a'$ as
    required (see Figure~\ref{fig:phi_stream}).
\end{proof}
\begin{figure}[htbp]
    \begin{center}
        \includegraphics[width=0.5\textwidth]{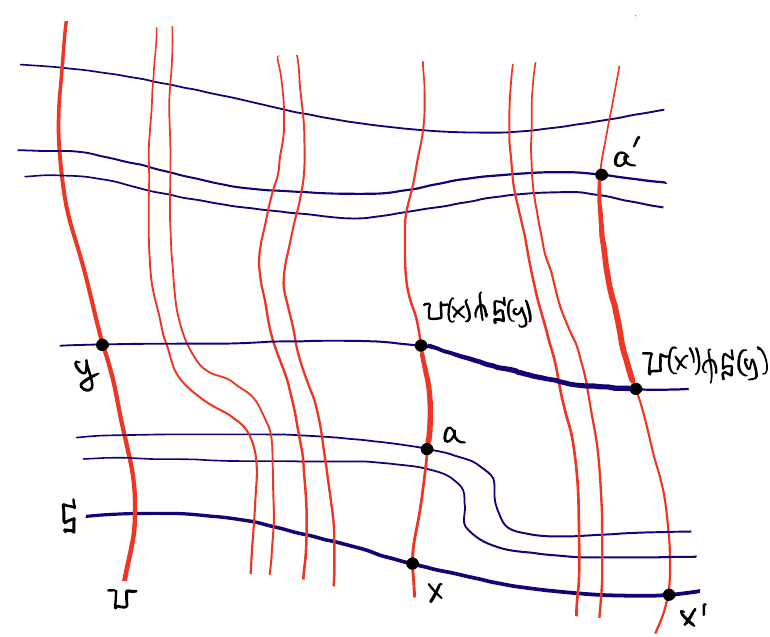}
    \end{center}
    \caption{Stream arcs showing that $a\sim a'$}
    \label{fig:phi_stream}
\end{figure}

In order to establish ergodicity, we need some means of relating the Birkhoff
averages of $\phi$ on different positive measure tartans. The following is a
natural, though strong, condition:

\begin{defns}[Tartan connection hypothesis] \label{def:sat}

    We say that the measurable pseudo-Anosov turbulations $(\cT^s, \nu^s)$,
    $(\cT^u, \nu^u)$ satisfy the \emph{tartan connection hypothesis} if for any
    compatible positive measure tartans~$R$ and~$S$, the stable streamlines
    containing fibers of~$R$ intersect the unstable streamlines containing fibers
    of~$S$ in positive measure.

\end{defns}

\begin{thm} \label{thm:ergodic}
    If the turbulations of a measurable pseudo-Anosov map~$F$ satisfy the tartan
    connection hypothesis, then~$F$ is ergodic.
\end{thm}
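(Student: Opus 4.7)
The plan is to reduce ergodicity, via density of $C(\Sigma)$ in $L^1(\mu)$ plus the converse of Birkhoff's ergodic theorem noted in the excerpt, to showing that $\phi^+$ is $\mu$-a.e.\ constant for every continuous $\phi\colon\Sigma\to\R$. By fullness, fix a countable family $\{R_i\}$ of compatible positive measure tartans with $\mu\left(\bigcup_i R_i^\pitchfork\right) = 1$, and by Lemma~\ref{lem:const_on_tartan} let $A_i\subset R_i^\pitchfork$ be a full measure subset on which $\phi^+=\phi^- = c_i$. It then suffices to show that $c_i = c_j$ for all $i$ and $j$.

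To compare $c_i$ and $c_j$, I would trim the two tartans. Setting $R = R_i$ and $A = A_i$, applying \eqref{eq:int_formula2} to the $\mu$-null set $R^\pitchfork\setminus A$ shows that, for any unstable fiber $\fu$ of $R$, the set of stable fibers $\fs$ of $R$ for which $A\cap E^\fs$ fails to have full $\nu_\fs$-measure is $\nu_\fu$-null. Call the remaining stable fibers of $R$ \emph{good}, and let $R'$ be the subtartan of $R$ obtained by retaining only the good stable fibers together with all unstable fibers of $R$. A short computation using \eqref{eq:int_formula2} shows that $R'$ is compatible with $\mu(R'^\pitchfork) = \mu(R^\pitchfork) > 0$, so $R'$ is a positive measure compatible tartan. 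Crucially, every stable streamline which contains a fiber of $R'$ meets $A$, because that fiber is good and hence $A$ has positive $\nu_\fs$-measure on it. Trim $S = R_j$ analogously, discarding the unstable fibers on which $A_j$ fails to have full $\nu_\fu$-measure, to obtain a positive measure compatible subtartan $S'$ every unstable streamline of which (meaning every unstable streamline containing a fiber of $S'$) meets $A_j$.

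Now apply the tartan connection hypothesis to the pair $R', S'$: the set $Q$ of points lying simultaneously on a stable streamline containing a fiber of $R'$ and on an unstable streamline containing a fiber of $S'$ satisfies $\mu(Q) > 0$. For $\mu$-a.e.\ $x \in Q$ the Birkhoff averages $\phi^+(x)$ and $\phi^-(x)$ exist and coincide. The stable streamline through $x$ contains a point $a\in A$, so Lemma~\ref{lem:stream-arc-phi} applied along the stream arc $[a,x]_s$ gives $\phi^+(x) = \phi^+(a) = c_i$; symmetrically the unstable streamline through $x$ contains a point $b\in A_j$, and Lemma~\ref{lem:stream-arc-phi} gives $\phi^-(x) = c_j$. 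Hence $c_i = \phi^+(x) = \phi^-(x) = c_j$, as required.

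The main obstacle is that the tartan connection hypothesis speaks of streamlines through fibers of the \emph{full} tartans $R$ and $S$, while the constants $c_i, c_j$ are only controlled on the full measure subsets $A_i, A_j$: a stable streamline containing a fiber of $R$ is not a priori forced to meet $A$, so one cannot directly read off $\phi^+ = c_i$ on $\cS^*_R$. The subtartan trick is the essential move: by restricting $R$ to its good stable fibers, which form a $\nu_\fu$-co-null subset, one obtains a positive measure compatible subtartan $R'$ to which the hypothesis still applies and all of whose streamlines are now guaranteed to witness the constant $c_i$.
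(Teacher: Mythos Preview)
Your proof is correct and follows essentially the same route as the paper's: both arguments reduce to showing $c_i=c_j$ by passing to positive measure compatible subtartans of $R_i$ and $R_j$ (discarding the null set of stable, respectively unstable, fibers on which the constant fails), applying the tartan connection hypothesis to these subtartans, and then propagating $\phi^+$ and $\phi^-$ along streamlines via Lemma~\ref{lem:stream-arc-phi} to a point of $Z$ in the resulting positive measure intersection. The only cosmetic difference is your criterion for a ``good'' stable fiber (that $A_i$ have full $\nu_\fs$-measure there) versus the paper's (that $\phi^+=\phi_i$ identically there), but since $\phi^+$ is constant along stable streamlines these select the same co-null family of fibers.
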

\begin{proof}

    Let~$R_i$ be a countable collection of positive measure compatible tartans with
    $\mu(\bigcup_i R_i^\pitchfork)=1$.

    By Lemma~\ref{lem:const_on_tartan} there are constants $\phi_i$ such that
    $\phi^+=\phi^-=\phi_i$ almost everywhere in $R_i^\pitchfork$, and it suffices
    to show that $\phi_i=\phi_j$ for all $i$ and $j$.

    Given~$i$ and~$j$, let $S_i$ and $S_j$ be the positive measure compatible
    subtartans of~$R_i$ and~$R_j$ obtained by discarding the stable fibers of~$R_i$
    on which $\phi^+\not=\phi_i$ (or~$\phi^+$ does not exist), and the unstable
    fibers of~$R_j$ on which $\phi^-\not=\phi_j$ (or~$\phi^-$ does not exist).

    By the tartan connection hypothesis, the intersection~$A$ of the union of the
    stable streamlines containing fibers of~$S_i$ with the union of the unstable
    streamlines containing fibers of~$S_j$ has positive measure, so that $\mu(A\cap
        Z)>0$. However if $a\in A\cap Z$ then $\phi^+(a)=\phi_i$ and $\phi^-(a)=\phi_j$
    by Lemma~\ref{lem:stream-arc-phi}. Since $a\in Z$ we have $\phi_i=\phi_j$ as
    required.

\end{proof}

Clearly Theorem~\ref{thm:ergodic} is also true under the weaker condition that for
any compatible positive measure tartans~$R$ and~$S$, there is a chain $R=T_0, T_1,
    \dots, T_k=S$ of compatible tartans such that, for each~$i$, the stable streamlines
containing fibers of~$T_i$ intersect the unstable streamlines containing fibers
of~$T_{i+1}$ in positive measure. This is worth mentioning because this weaker
condition is in turn implied by the \emph{Tartan chain hypothesis}: that for any
compatible positive measure tartans~$R$ and~$S$, there is a chain as above such
that $\mu(T_i^\pitchfork \cap T_{i+1}^\pitchfork)>0$ for each~$i$.

\bibliographystyle{amsplain}
\bibliography{dynmpa}

\end{document}